\documentclass[11pt]{amsart}
\usepackage{amssymb, amsthm, amsmath, bbm}
\usepackage{tensor}
\usepackage{mathtools}
\usepackage{mathrsfs}
\usepackage{hyperref, manfnt}
\usepackage{setspace}
\usepackage{graphicx}
\usepackage{enumitem}
\usepackage{MnSymbol}
\usepackage{fullpage}
\usepackage{tikz}
\usepackage{tikz-cd}
\usepackage{wrapfig}
\usepackage{float}
\makeatletter
\newcommand*\bigcdot{\mathpalette\bigcdot@{.5}}
\newcommand*\bigcdot@[2]{\mathbin{\vcenter{\hbox{\scalebox{#2}{$\m@th#1\bullet$}}}}}

\makeatother

\usepackage{kantlipsum}
\newcommand\Z{\mathbb{Z}}
\usepackage{bm}
\usepackage[cmtip,all]{xy}

\newtheorem*{theorem*}{Theorem}
\newtheorem{innercustomconj}{Conjecture}
\newenvironment{conj}[1]
  {\renewcommand\theinnercustomconj{#1}\innercustomconj}
  {\endinnercustomthm}

\newtheorem{theorem}{Theorem}
\newtheorem{definition}{Definition}[section]
\usepackage{listings}
\numberwithin{theorem}{section}

\newtheorem{corollary}{Corollary}[section]
\newtheorem{def-lemma}{Definition-Lemma}
\newtheorem{rem}{Remark}
\newtheorem*{cor*}{Corollary}
\newtheorem{lemma}{Lemma}

\newtheorem*{rem*}{Remark}
\usepackage{verbatim}
\numberwithin{lemma}{section}
\newtheorem*{lemma*}{Lemma}

\newtheorem*{claim*}{Claim}

\tikzset{
	symbol/.style={
		draw=none,
		every to/.append style={
			edge node={node [sloped, allow upside down, auto=false]{$#1$}}}
	}
}

\theoremstyle{definition}

\newtheorem{innercustomthm}{Theorem}
\newenvironment{customthm}[1]
  {\renewcommand\theinnercustomthm{#1}\innercustomthm}
  {\endinnercustomthm}

\newcommand{\p}{\mathbb{P}}
\newcommand{\ri}{\rightarrow}
\newcommand{\J}{\mathscr{J}}
\newcommand{\jac}{\mathcal{J}}
\newcommand{\mo}{\mathcal{O}}
\newcommand{\K}{\mathbb{C}}

\newcommand{\s}{\sigma}
\newcommand{\m}{\mathcal{M}}

\newcommand{\h}{\mathcal{H}}
\newcommand{\U}{\mathscr{U}}
\newcommand{\ve}{\mathcal{V}ec}
\newcommand{\V}{\mathcal{V}}

\newcommand{\E}{\mathscr{E}}
\newcommand{\e}{\mathcal{E}}
\newcommand{\Q}{\mathbb{Q}}
\newcommand{\C}{\mathscr{C}}
\newcommand{\mC}{\mathcal{C}}
\newcommand{\ka}{\kappa}

\newcommand{\T}{\Theta}

\newcommand{\lb}{\mathcal{L}}

\newcommand{\ext}{Filt}

\newcommand{\la}{\lambda}
\newcommand{\hr}{\hookrightarrow}

\usepackage{stackengine}
\newcommand{\hollowslash}{\setbox0=\hbox{/}\def\holwd{3pt}%
  \stackengine{-.3pt}{/}{\rlap{\kern.5pt\rule{\holwd}{0.4pt}}}{O}{r}{F}{F}{S}%
  \kern\dimexpr\holwd-\wd0-.2pt\relax%
  \stackengine{-.4pt}{/}{\llap{\rule{\holwd}{0.4pt}\kern-0.1pt}}{U}{l}{F}{F}{S}%
}

\lstdefinelanguage{Macaulay2}{
    keywords={ideal, module, matrix, ring, quotient, basis, resolution, degree, gens, hilbertSeries, dim, codim, degree, substitute, syz, for, if, then, else, do, while, return, local, try, catch, break, continue, load, needs, newClass, newPackage, peek, toString, toExternalString, net, assert, code, installPackage, viewHelp, documentation, exit, quit, clearAll},
    keywordstyle=\color{blue}\bfseries,
    sensitive=true,
    morecomment=[l]--,
    morecomment=[n]{/*}{*/},
    morestring=[b]",
    basicstyle=\ttfamily\small,
    showstringspaces=false,
    breaklines=true,
    frame=single,
    numbers=left,
    numberstyle=\tiny\color{gray},
}

\title{\MakeUppercase{Cycles in the universal moduli stack of bundles of rank two over genus two curves} }
\author{Shubham Saha}
\address{Department of Mathematics, University of California, San Diego, La Jolla, CA 92093, USA}
\email{shsaha@ucsd.edu}
\usepackage{microtype}
\begin{document}
	\begin{abstract}
We present a conjecture for the Chow ring of the universal moduli stack of bundles over hyperelliptic curves and prove it for rank and genus two. 
Consequently, we obtain explicit generators and relations to conclude that the Chow ring is tautological.
In addition, we compute the Chow rings of products of universal Jacobians over genus two curves.
	\end{abstract}
	\subjclass[2020]{14C17, 14D20, 14H45}
	\maketitle
	\vspace{-1em}
\section*{Introduction}\label{intro}
The cohomology of the moduli space of semistable vector bundles over a curve has been the subject of extensive study \cite{ kirwan,zagier,king-newstead, kirwan-earl}. 
We have explicit sets of generators for all coprime ranks and degrees \cite{atiyah-bott}, relations for the cohomology ring \cite{kirwan-earl}, and formulas to compute Betti numbers \cite{del-bano}.
These results were later generalized to the moduli stack of vector bundles (including unstable) over a curve $C$, namely $Bun_{r,d}C$, in \cite{heinloth}.

Similar computations for universal moduli spaces of semistable bundles over curves were hypothesized to be accessible in the future,
 when the moduli spaces $\overline{\mathcal{M}_{g,n}}(X,\beta)$ of stable maps are better-understood and localisation techniques are used with greater effect, 
 by Kirwan in her ICM talk \cite[\S 8]{kirwan-talk}. 
 
However, much remains unkown about the universal moduli spaces to date due to the technical difficulties outlined in \cite{kirwan-talk}.
Recent computations of the Chow ring of the universal moduli space \cite{larson-18,larson-24,shubham-r2} have initiated progress in understanding their intersection theory.
Computations of the full Chow ring, in terms of explicit generators and relations, of the universal moduli stack of bundles over curves include bundles over genus zero curves \cite{larson-18} and universal Picard stacks over hyperelliptic curves \cite{larson-24}. 

The twisted-kappa classes, defined in \cite{larson-24, shubham-r2}, are tautological classes on the universal moduli stack of bundles. We show that they generate the Chow ring of the universal moduli stack of bundles of rank two over genus two curves and compute relations.
Our approach stands in contrast to the approach used by Heinloth \cite[\S 5]{heinloth} where the Poincare polynomial for the moduli space of semistable bundles over a curve was computed using the moduli stack.
We hope to extend this approach to higher ranks and genera through the following conjecture.

\begin{conj}{A}\label{conj:main}
	Let $\ve^\h({r,d,g})$ be the universal moduli stack of vector bundles of rank $r$, degree $d$ over hyperelliptic curves of genus $g$.
For any $g\geq 2, d\in\Z$ and a hyperelliptic curve $C$ of genus $g$, the composition of restriction and cycle-class map 
	$$A^*(\ve^\h({r,d,g}))\ri H_{a}^*(Bun_{r,d}C)$$ is an isomorphism onto $H^*_{\ka}$ where $H^*_{\ka}$ is the cohomology subring generated by twisted-kappa classes and $H^*_a$ is the algebraic cohomology subring.
\end{conj}

We prove the conjecture for $\ve(2,d,2)$. 
Analogous versions of the conjecture have further been verified for universal moduli spaces of slope-stable and Higgs stable bundles of rank two in \cite{shubham-higgs}.
For universal Jacobians over hyperelliptic curves, the conjecture is an immediate consequence of \cite[Theorem 1.1]{larson-24}.

The conjecture is closely related to the \textit{Franchetta Property}, which states that cycles in a family $\chi \ri B$ are homologically trivial when restricted to a very general fiber over $b\in B$ if and only if they are trivial in Chow of the same fiber. 
Effectively, the conjecture claims that the morphism $\ve^\h(r,d,g)\ri \h_g$ satisfies the Franchetta Property, 
and that the Gysin restriction map $A^*(\ve^\h(r,d,g))\ri A^*(Bun_{r,d}C)$ is injective for all $[C]\in |\h_g|$.
For more details, see \cite{laterveer-franchetta}. 

We apply excision on $\ve(2,d,2)$ given by the Harder-Narasimhan(HN) stratification \cite[Proposition 5.3]{heinloth}. 
The unstable HN-strata have been extensively studied in the form of filtration stacks \cite{prada}.
We compute the Chow rings of products of universal Jacobians over genus two curves and relate them to cycles on filtration stacks using \cite[Corollary 3.2]{prada}. The relevant notations are defined in \S\ref{notation}.
\begin{customthm}{A}
  Let $\pi_1,\pi_2$ be the projections onto the first and second factors of $J_*(0)\times_{M_*}J_*(0)$, and let $\mu:J_*(0)\times_{M_*}J_*(0)\ri J_*(0)$ be the twisting map. We have
  \begin{gather*}Im(A^*(J^0_2\times_{M_2}J^0_2)\ri A^*(J_*(0)\times_{M_*}J_*(0)))\simeq \Q[\pi_1^*\T_0,\pi_2^*\T_0, \mu^*\T_0]/
    (\pi_1^*\T_0^3,\pi_2^*\T_0^3, \mu^*\T_0^3,(\mu^*\T_0-\pi_2^*\T_0)\pi_1^*\T_0^2,\\(\mu^*\T_0-\pi_1^*\T_0)\pi_2^*\T_0^2, 
     \mu^*\T_0^2(\pi_1^*\T_0-\pi_2^*\T_0),\mu^*\T_0^2\pi_1^*\T_0+\pi_1^*\T_0\pi_2^*\T_0(\pi_1^*\T_0+\pi_2^*\T_0)-2\mu^*\T_0\pi_1^*\T_0\pi_2^*\T_0
	).\end{gather*}
\end{customthm}
We have the following theorems as a consequence to the conjecture.
\begin{customthm}{B}
  We have the following isomorphism for all $d\in \Z$:
  \begin{gather*}A^*(\ve({2,d,2}))\simeq \Q[\ka_{0,1,0},\ka_{0,0,1},\ka_{-1,0,1},\ka_{-1,2,0},\ka_{-1,0,2},\ka_{-1,1,1}]/(x_1^3,x_1^2x_3,x_1^2x_2+x_1x_3^2,6x_1x_2x_3+x_3^3,x_1x_3^3,\\
	x_1x_2^2+x_2x_3^2,x_2^2x_3,x_2x_3^3,x_2^3,x_3^5)\end{gather*}
  where $x_1 := \dfrac{d\ka_{0,1,0}-\ka_{-1,2,0}}{2},x_2 := \dfrac{\ka_{0,0,1}\ka_{-1,0,1}-\ka_{-1,0,2}}{2},x_3 := \dfrac{\ka_{0,1,0}\ka_{-1,0,1}+d\ka_{0,0,1}-2\ka_{-1,1,1}}{2}$.
\end{customthm}
\begin{customthm}{C}
	Given any $d\in \Z$, we have the following relations between higher twisted-kappa classes in $A^*(\ve({2,d,2}))$:
  \begin{gather*}
    2^{n-1}\ka_{-1,0,n} =\ka_{0,0,1}^{n-2}(n(n-1)\ka_{-1,0,2}-(n^2-2n)\ka_{-1,0,1}\ka_{0,0,1}) \forall n\geq 2,\\
    \ka_{-1,m,n} = \dfrac{1}{2^n}\ka_{0,0,1}^n\ka_{-1,m,0}+\dfrac{1}{2^m}\ka_{0,1,0}^m\ka_{-1,0,n}+\dfrac{nm}{2^{n+m-2}}\ka_{0,1,0}^{n-1}\ka_{0,0,1}^{m-1}(\ka_{-1,1,1}-\dfrac{1}{2}(\ka_{-1,0,1}\ka_{0,1,0}+d\ka_{0,0,1}))\forall m,n\geq 1,\\
   2^{m+n-1} \ka_{0,m,n} = \ka_{0,1,0}^m\ka_{0,0,1}^n\forall m,n\geq 0,\quad 
    \ka_{i,m,n} = 0\forall i\geq 2.
  \end{gather*}
\end{customthm}
	\subsection{Notation}\label{notation}
	\noindent All schemes in this paper are defined over the field of complex numbers. 
	All Chow rings have rational coefficients. We use the subspace convention for projective bundles. 
	
	We denote the locus of hyperelliptic curves with a marked Weierstrass point as $\mathcal{H}_{g,w}\hr \mathcal{M}_{g,1}$ for genus $g$ curves, and let $\mC_{g,w}$ be the universal curve over $\mathcal{H}_{g,w}$. 
	When $g=2$, we shall simply denote $\h_{2,w}$ as $\m_{2,w}$.
	For a fibre product $\tensor[_{i\in I}]{\prod}{_S}X_i$ over a scheme $S$, and a subset $J\subset I$, we denote the projection onto the product $\tensor[_{j\in J}]{\prod}{_S}X_j$ by $p_J$. 

	We adapt the following notation from \cite{shubham-r2} and \cite{larson-24}. The universal Picard stack of genus $g$ curves is denoted by $\J^d_g$, and the associated good moduli space is denoted by $J^d_g$.
	The moduli space of genus two curves with six marked Weierstrass points is denoted by $M_*$. There exists a universal family $\C_*\ri M_*$, the associated universal Jacobian for degree $d$ line bundles of which is denoted by $J_*(d)$.
	We denote the universal theta divisor on $J_*(1)$ by $\Theta_1$ and represent its image in $J_*(d)$ by $\T_d$ via the twisting morphisms defined using the first marked point.
	For a family of rank two bundles on curves $\E\ri\C_S\xrightarrow{\pi} S$, we denote twisted-kappa classes by $\ka_{a_0,a_1,a_2}:= \pi_*(K_\pi^{a_0+1}c_1(\E)^{a_1}c_2(\E)^{a_2})$.
\subsection*{Acknowledgements} 
I would like to thank my advisor Elham Izadi for her continued support and guidance. I am grateful to Jochen Heinloth, Soumik Ghosh, Qizheng Yin, and Shend Zhjeqi for helpful discussions.

\section{Computations on the universal moduli space}
The Chow ring of the universal moduli space of stable bundles with rank two, degree three over genus two curves, denoted by $U(2,3,2)$, was computed in \cite[Theorem B]{shubham-r2}. 
Using this, we compute the Chow ring of the universal moduli stack $\U(2,3,2)$.
\begin{theorem}\label{sec:gerbe-chow}
	The Chow ring $A^*(\U(2,3,2))$ is given by
	\begin{gather*}A^*(\U(2,3,2))=\Q[\ka_{0,1,0},\ka_{-1,2,0},\ka_{-1,0,1},\ka_{-1,1,1}]/(\T_U^3,H_U^2\T_U^2-4B_*\T_U^2,B_*H_U^2-B_*H_U\T_U+\dfrac{1}{2}B_*\T_U^2,\\
B_*H_U^2-B_*^2,H_U^3+H_U^2\T_U+\dfrac{1}{2}H_U\T_U^2-4B_*H_U)\text{ where }\\
H_U= \dfrac{1}{2}\left(4\ka_{-1,0,1}-\ka_{-1,2,0}\right), \T_U= \dfrac{1}{2}\left(3\ka_{0,1,0}-\ka_{-1,2,0}\right),\ka_{0,0,1} = \dfrac{1}{4}\ka_{0,2,0}+\dfrac{1}{8}(4\ka_{-1,0,1}-\ka_{-1,2,0})^2, \ka_{0,2,0} =\dfrac{1}{2}\ka_{0,1,0}^2, \\
\ka_{-1,3,0}= \dfrac{1}{4}\ka_{0,1,0}(6\ka_{-1,2,0}-9\ka_{0,1,0}), B_*= \dfrac{1}{2}\left(\dfrac{1}{2}(\kappa_{-1,2,0}-\kappa_{0,1,0})-\kappa_{-1,0,1}\right)^2- \dfrac{1}{2}\left( \dfrac{\kappa_{-1,3,0}}{3}-{\ka_{-1,1,1}}-\dfrac{\ka_{0,2,0}}{2}+{\ka_{0,0,1}}\right).\end{gather*}
\end{theorem}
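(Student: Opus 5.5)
The plan is to compare the stack with its good moduli space. Since $\gcd(2,3)=1$, every semistable bundle in $\U(2,3,2)$ is stable with automorphism group $\mathbb{G}_m$. Hence the morphism $\U(2,3,2)\ri U(2,3,2)$ is a $\mathbb{G}_m$-gerbe. With rational coefficients, the Chow ring of such a gerbe is $A^*(U(2,3,2))[t]$, where $t$ is a degree one class restricting to the weight-one character on the band. Concretely, if $\E$ is a quasi-universal bundle on $\C_U$, then $A^*(\U(2,3,2))$ is the polynomial ring over $A^*(U(2,3,2))$ in any class that pairs nontrivially with the inertial $\mathbb{G}_m$.

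The first step is to choose such a class and identify it with a twisted-kappa class. On the stack there is a genuine universal bundle $\E$, but it is not pulled back from $U(2,3,2)$; it has weight one under the inertial $\mathbb{G}_m$. Twisting $\E$ by a line bundle $\lb$ pulled back from the base changes the twisted-kappa classes as follows:
\[
c_1\mapsto c_1+2\ell,\qquad c_2\mapsto c_2+c_1\ell+\ell^2 .
\]
Here $\ell=c_1(\lb)$, and pushing forward with $\pi_*(K_\pi)=2g-2=2$ and $\pi_*(1)=0$ gives
\[
\ka_{0,1,0}\mapsto \ka_{0,1,0}+4\ell,\qquad \ka_{-1,2,0}\mapsto \ka_{-1,2,0}+12\ell,\qquad \ka_{-1,0,1}\mapsto \ka_{-1,0,1}+3\ell .
\]
So every $\ka$ class of nonzero weight can serve as $t$; for instance $\ka_{0,1,0}$ or $\ka_{-1,2,0}$ works. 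The combinations $H_U=\frac12(4\ka_{-1,0,1}-\ka_{-1,2,0})$ and $\T_U=\frac12(3\ka_{0,1,0}-\ka_{-1,2,0})$ are exactly weight zero under these rules. Therefore they descend to $U(2,3,2)$, and I expect them to match the classes $H$ and $\Theta$ of \cite[Theorem B]{shubham-r2}. Likewise $B_*$ should be the weight-zero combination matching the class $B$ there.

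The second step is to import the presentation of $A^*(U(2,3,2))$ from \cite[Theorem B]{shubham-r2}, rewriting its generators via the normalized expressions above. Its relations then become the listed relations in $H_U,\T_U,B_*$. The free variable $t$ accounts for the fourth generator, since the ring has four generators but only three weight-zero combinations.

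The third step is to verify the auxiliary identities. These are the formulas for $\ka_{0,0,1}$, $\ka_{0,2,0}$ and $\ka_{-1,3,0}$, which show that these classes are not new generators. I would obtain them by Grothendieck–Riemann–Roch on $\pi_!\E$ and $\pi_!(\E\otimes\omega_\pi)$ combined with the known structure of $A^*(U(2,3,2))$. Alternatively, on the gerbe one can write $\E=\E_0\otimes\lb$ with $\E_0$ of weight zero descending to a twisted sheaf, and compute the classes directly. For instance, $\ka_{0,2,0}=\frac12\ka_{0,1,0}^2$ should come from $c_1(\E)$ being pulled back from $J$ on the base up to the fibre class, together with the genus two relation on $\pi_*(K\cdot c_1^2)$.

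The main obstacle is this bookkeeping. One must check that the generators used in \cite{shubham-r2} coincide exactly with the normalized $H_U$, $\T_U$, $B_*$, including the choice of normalization of the quasi-universal bundle. One must also check that the twisted-kappa expressions for $\ka_{0,0,1}$ and $\ka_{-1,3,0}$ hold on the nose and not just modulo $t$. I would first do the weight computation, then match the classes by restricting to a fixed curve $C$, where the cohomology of $Bun_{2,3}C$ is known.
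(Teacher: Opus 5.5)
Your outline is the paper's proof. It passes to the $\mathbb{G}_m$-rigidification, uses that rationally $A^*(\U(2,3,2))$ is a polynomial ring over $A^*(U(2,3,2))$ in one degree-one class, and imports \cite[Theorem B]{shubham-r2}. The paper takes $\ka_{0,1,0}$ as the free class, citing \cite[Lemma 6.5]{larson-24} and \cite[Theorem A.1]{fringuelli}.

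Your twisting rules are a self-contained substitute for those citations, and they are correct:
\begin{itemize}
\item They show that $\ka_{0,1,0}$ restricts to $4t$ on a residual $B\mathbb{G}_m$, so it is a legitimate free variable.
\item They confirm that $H_U$, $\T_U$ and $B_*$ are twist-invariant. Using $\ka_{-1,1,1}\mapsto\ka_{-1,1,1}+\ell\ka_{-1,2,0}+2\ell\ka_{-1,0,1}+9\ell^2$ and the analogous rules for $\ka_{-1,3,0},\ka_{0,2,0},\ka_{0,0,1}$, the $\ell$-terms in $B_*$ cancel.
\end{itemize}

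One correction: $\U(2,3,2)\ri U(2,3,2)$ is not itself a $\mathbb{G}_m$-gerbe. A pair $(C,E)$ can have extra automorphisms lifting automorphisms of $C$, e.g.\ the hyperelliptic involution when $\iota^*E\cong E$. The map is a $\mathbb{G}_m$-gerbe over the Deligne--Mumford rigidification $\mathcal{U}(2,3,2)$, whose rational Chow ring is that of $U(2,3,2)$; this is the paper's first line. That gerbe is in fact trivial, since $\det R\pi_*\E$ has weight $\chi=1$. Hence $\E\otimes(\det R\pi_*\E)^{-1}$ is an honest universal bundle on the rigidification, not a twisted sheaf.

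Your third step would not work as described. Grothendieck--Riemann--Roch for $\pi_!\E$ and $\pi_!(\E\otimes\omega_\pi)$ yields no relations in degrees $2$ and $3$:
\begin{itemize}
\item $\pi_!\E$ has rank $\chi=1$ but is not a vector bundle.
\item $\pi_*(\E\otimes\omega_\pi)$ is a rank $5$ bundle, so rank considerations only kill its Chern classes in degree at least $6$.
\end{itemize}

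The identities come from other sources. On the Weierstrass cover one has $K_\pi=2[w]$ rationally, so $\ka_{0,a,b}=2\,w^*(c_1^ac_2^b)$.
\begin{itemize}
\item This gives $\ka_{0,2,0}=\frac12\ka_{0,1,0}^2$ at once.
\item The $\ka_{-1,3,0}$ identity is equivalent to $\pi_*(m_0^3)=0$. Here $m_0$ is the first Chern class of $\det\E(-3w)$, normalized along $w$. The vanishing follows from $\iota^*m_0=-m_0$ for the hyperelliptic involution $\iota$, which fixes $w$.
\item The $\ka_{0,0,1}$ identity is equivalent to $w^*(c_1^2-4c_2)=-H_U^2$. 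This is a genuine degree-two relation, a universal form of the genus-two Mumford relation.
\end{itemize}

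The paper does not derive the last relation. It imports it from \cite[Corollary 9]{shubham-r2}, and the other two identities from \cite[Corollary 1.2]{larson-24}. You need all three even to express $B_*$, and hence $\ka_{-1,1,1}$, in terms of the four stated generators. So they must be cited, or proved by arguments like the ones above, rather than recovered from GRR.
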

\begin{proof}
We have $A^*(\mathcal{U}(2,3,2))\simeq A^*(U(2,3,2))$ since $\mathcal{U}(2,3,2)$ is a Deligne-Mumford stack.
Applying an argument similar to \cite[Lemma 6.5]{larson-24} and using generators of the Picard group from \cite[Theorem A.1]{fringuelli}, we have
$$A^*(\U(2,3,2))\simeq A^*(U(2,3,2))[\ka_{0,1,0}]$$
due to the $\mathbb{G}_m$-rigidification map $\nu_{2,3}: \U(2,3,2)\ri \mathcal{U}(2,3,2)$.
Hence, we get the claimed generators and relations. The claimed expressions for $\ka_{0,2,0},\ka_{-1,3,0}$ follow from \cite[Corollary 1.2]{larson-24}, and $\ka_{0,0,1}$ follows from \cite[Corollary 9]{shubham-r2}.
\end{proof}
\section{Product of universal Jacobians}\label{sec:jxj}
Following the notation in \cite{larson-24,shubham-r2}, let $J^d_2$ be the universal Jacobian of degree $d$ line bundles over genus $2$ curves. 
We consider the space $M_*\simeq M_{0,6}$, the induced map $M_*\ri M_2$ is defined using a family of curves $\C_*\ri M_*$ with sections $\{\sigma_i\}_{1\leq i\leq 6}$ defining the Weierstrass points of the fibers.
The relative Jacobian of degree $d$ for the family $\C_*\ri M_*$ is denoted by $J_*(d)$.\\
Let $J^d_{2,w}$ be the universal Jacobian of degree $d$ for the family of genus $2$ curves with a marked Weierstrass point.
\begin{definition}
	For any $n\in \mathbb{N}$, the fiber product $\times_{M_{2,w}}^nC_{2,w}$ has $i,j$-diagonals $\Delta_{ij}$ for all $1\leq i<j\leq n$, and $i,j$-conjugate diagonals $\tilde{\Delta}_{ij}$ for all $1\leq i\leq j\leq n$ defined by the equality of $i,j$-coordinates, and conjugacy of $i,j$-coordinates respectively. 
\end{definition}
\begin{lemma}\label{lemw:cxj}
	We have 
	$$A^*(\times_{M_{2,w}}^3C_{2,w}\setminus (\cup_{1\leq i<j\leq 3}\Delta_{ij}\cup_{1\leq i\leq j\leq 3}\tilde{\Delta}_{ij})) = \Q.$$
\end{lemma}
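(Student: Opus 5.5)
We will present the open subset as a quotient of an explicit open subset of affine space by a group whose rational Chow ring we know, so that excision reduces everything to a point. A genus two curve with a marked Weierstrass point can be written as $y^2=f(x)$, where $f$ is a squarefree quintic and the marked point is the point at infinity. Equivalently, $\m_{2,w}\simeq [U/G]$ with $U\subset \mathbb{A}^6$ the open set of squarefree degree-five polynomials. After normalizing the leading coefficient, $G$ is the group of affine transformations $x\mapsto ax+b$ together with the scaling of $y$. Over this presentation the triple fiber product $\times^3_{\m_{2,w}}C_{2,w}$ is the quotient of the space of tuples $(f,(x_1,y_1),(x_2,y_2),(x_3,y_3))$ with $y_k^2=f(x_k)$. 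We also allow points at infinity, but the conjugate diagonals $\tilde\Delta_{kk}$ contain the Weierstrass points, including the one at infinity, so those points are removed anyway. On the complement, therefore, every point is affine with $y_k\neq 0$.

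The conditions that survive are $x_i\neq x_j$ (no diagonal and no conjugate diagonal between distinct indices) and $y_k\neq0$. Since $\tilde\Delta_{kk}$ is exactly the locus $y_k=0$ for finite points, each point is determined by $(x_k,y_k)$ with $y_k\neq 0$. The key step is to reparametrize. Given three distinct $x_k$ and three nonzero values $y_k^2$, the condition $f(x_k)=y_k^2$ imposes three independent linear conditions on the coefficients of the quintic $f$, by Lagrange interpolation. Hence the space of tuples $(f,x_k,y_k)$ is an open subset of an affine bundle over
\[
W=\{(x_1,x_2,x_3,y_1,y_2,y_3): x_i \text{ distinct},\ y_k\neq 0\}.
\]
The open condition is that $f$ is squarefree with nonzero leading coefficient. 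Using the $G$-action, namely translation and scaling of $x$, we can normalize $x_1=0$, $x_2=1$, leaving $x_3\in\mathbb{A}^1\setminus\{0,1\}$ and the $y$-scaling.

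By homotopy invariance and excision, the rational Chow ring of the quotient is then a quotient of the Chow ring of the torus-type base $[(\mathbb{A}^1\setminus\{0,1\})\times \mathbb{G}_m^3/\mathbb{G}_m]$. Here the residual $\mathbb{G}_m$, coming from the scaling $y\mapsto \lambda y$ (with compensating weights on $f$), acts on the $y_k$ with nonzero weight. We must keep track of how that residual $\mathbb{G}_m$ acts on the $y_k$ and whether the hyperelliptic involution gives a finite stabilizer; finite groups do not matter rationally. The base is an open subset of affine space modulo $\mathbb{G}_m$, and $\mathbb{G}_m$ acts freely on $\mathbb{G}_m^3$ up to finite stabilizers. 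Thus its Chow ring is a quotient of $A^*(B\mathbb{G}_m)=\Q[t]$, and $t$ is killed because $y_1$ is an invertible function of nonzero weight. So the base has Chow ring $\Q$. Passing to the open subset of the affine bundle (the squarefree locus) only yields a quotient of $\Q$ in degree zero, and the space is nonempty, so the answer is $\Q$.

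The main obstacle is making the parametrization honest. First, we must verify that the interpolation map gives an affine bundle, meaning the three evaluation conditions are linearly independent on quintics once the $x_k$ are distinct; Vandermonde settles this. Second, we must track the group action precisely, including the stabilizers coming from automorphisms of genus two curves, which are finite and harmless rationally. Third, we must ensure that removing the non-squarefree locus and imposing the leading-coefficient condition cannot introduce new classes; excision only produces quotients, so this is automatic.
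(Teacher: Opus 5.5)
Your proof is correct, and it takes a genuinely different route from the paper's.

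The paper adapts the Casnati--Fontanari model. It maps $(C,p_1,p_2,p_3)$ by $|2p_1+p_2+p_3|$ to a nodal plane quartic, rigidified by fixing the images of the node and the $p_i$, so the curve is encoded by coefficients $\la_1,\dots,\la_6$. It then adds a Weierstrass point by choosing a root $c$ of the tangency condition for lines through the node and solving for $\la_1$. This gives only a finite surjection from an open $U'\subset\mathbb{A}^6$ onto the space, and triviality of rational Chow groups is pushed down via \cite[Remark 2.2]{vakil}.

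You instead use the marked Weierstrass point directly. With $w$ at infinity in $y^2=f(x)$, removing $\tilde\Delta_{kk}$ makes every $p_k$ affine with $y_k\neq 0$, and the other excised loci become $x_i\neq x_j$. The incidence $f(x_k)=y_k^2$ is linear in $f$, so Lagrange interpolation exhibits the space as an open subset of a $G$-equivariantly trivial rank-three bundle over $[W/G]$.

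Your route gives more than the paper's. $\mathrm{Aff}_1$ acts freely on pairs of distinct $x$'s and the $y$-scaling acts freely on $\mathbb{G}_m^3$, so there are no stabilizers at all. Using the slice $x_1=0,x_2=1,y_1=1$, the space is an open subscheme of $\mathbb{A}^1\times\mathbb{G}_m^2\times\mathbb{A}^3$, hence rational and Chow-trivial with no finite-cover argument. The same computation also proves Lemma \ref{lem:c4} verbatim, and it works for up to six marked points, since evaluation at $n\le 6$ distinct points is surjective on polynomials of degree at most five. In exchange, the paper's route stays continuous with the parametrization used in \cite{shubham-r2} for unmarked curves and needs no quotient presentation of $\m_{2,w}$.

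One small slip to fix. With $U\subset\mathbb{A}^6$ the group must be all of $\mathrm{Aff}_1\times\mathbb{G}_m$, acting by $f\mapsto c^{-2}f(ax+b)$. If you really normalize $f$ to be monic, the $y$-scaling is tied to the $x$-scaling by $c^2=a^5$ and the group is two-dimensional. After fixing $x_1=0,x_2=1$, only a residual $\mu_2$ then remains rather than a $\mathbb{G}_m$. This is harmless rationally, but the two descriptions should not be mixed.
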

\begin{proof}
	We modify the proof in \cite[Lemma 4.8]{shubham-r2}. We consider the parametrisation of genus two curves with three marked points in \cite[\S 2.4]{casnati}.
A genus two curve with three distinct marked points $(C,p_1,p_2,p_3)$, with none of the $p_i$’s Weierstrass or conjugates under the hyperelliptic involution can be embedded as a plane quartic with a single nodal/cuspidal singularity given by the linear system $|2p_1+p_2+p_3|$.\\
We rigidify the embedding further by setting the images of the node $N\mapsto  [1 : 0 : 0],p_1\mapsto [0 : 1 : 0],p_2\mapsto [0 : 0 : 1],p_3 \mapsto [0 : 1 : 1]$. 
The equation of the image is then an element of the space
$$V := \{\la_1x_1^2x_2^2+\la_2x_1^2x_2x_3+\la_3x_1^2x_3^2+x_1x_2^3+\la_4x_1x_2^2x_3+\la_5x_1x_2x_3^2+\la_6x_1x_3^3+ x_2(x_2-x_3)x_3^2\}.$$
 We find images of Weierstrass points under this map. Weierstrass points correspond to lines $Z(x_2-cx_3)$ for $c\in \K$ tangent to the quartic. Therefore, $c$ must be a root of the polynomial 
 \begin{equation}\label{c-3}(c^3+c^2\la_4+c\la_5+\la_6)^2 = 4c(c-1)(c^2\la_1+c\la_2+\la_3)\tag{$E_{(C,p_1,p_2,p_3)}$}\end{equation} 
Let $U\subset \mathbb{A}_\K^6$ be the open subset of points parametrising ordered tuples $(\la_i)_{1\leq i\leq 6}$ corresponding to equations of nodal/cuspidal plane quartics.
Given any $\{\la_i\}_{2\leq i\leq 6}\in \K,c\in \K\setminus\{0,1\}$, we can uniquely determine $\la_1$ so that an equation in the form \ref{c-3} is satisfied.\\
Let $U'\subset \mathbb{A}_\K^6$ be the corresponding open subset parametrising ordered tuples $(c,\la_2,\cdots,\la_6)$ such that the associated tuple $(\la_i)_{1\leq i\leq 6}$ is contained in $ U$.
Using the blowup construction in \cite[Lemma 3.4]{shubham-r2}, we have a family of genus $2$ curves defined over $U'$, with a Weierstrass section defined using $c$.\\
Consequently, we have a finite, surjective map $U'\ri \times_{M_{2,w}}^3C_{2,w}\setminus (\cup_{1\leq i<j\leq n}\Delta_{ij}\cup_{1\leq i\leq j\leq n}\tilde{\Delta}_{ij})$. Hence, we have the claimed Chow triviality by \cite[Remark 2.2]{vakil}.
\end{proof}
\begin{corollary}\label{cor:cj}
	The pullback map 
	$$A^*(C_2\times_{M_2}J^d_2)\ri A^*(C_{2,w}\times_{M_{2,w}}J^d_{2,w})$$
	is an isomorphism for all values of $d\in \Z$.
\end{corollary}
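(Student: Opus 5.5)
Since we work with rational coefficients, the strategy is to use the finite map $C_{2,w}\times_{M_{2,w}}J^d_{2,w}\ri C_2\times_{M_2}J^d_2$, obtained by forgetting the marked Weierstrass point; it is finite of degree $6$. Pullback along a finite flat (or proper, generically finite between stacks with quotient singularities) map is injective rationally, by the projection formula: $f_*f^*\alpha = 6\alpha$. So injectivity of the pullback is automatic. It remains to prove surjectivity: every class on $C_{2,w}\times_{M_{2,w}}J^d_{2,w}$ must be pulled back from $C_2\times_{M_2}J^d_2$.

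\textbf{Reducing to a fixed degree.} A section of the family over $M_{2,w}$ is the Weierstrass section $w$, and twisting by $\mo(w)$ gives isomorphisms $J^d_{2,w}\simeq J^{d'}_{2,w}$. On $M_2$, twisting by $K$ shifts degrees by $2$. So it suffices to treat $d=0,1$, and the statement for $d$ and $d+2$ agree compatibly.

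\textbf{Generators upstairs.} I describe $A^*(C_{2,w}\times_{M_{2,w}}J^d_{2,w})$ by the same stratification used in \cite{shubham-r2}. Use the Abel--Jacobi type maps from $\times^n_{M_{2,w}}C_{2,w}$ to the Jacobian: for $d=2$ (after twisting), a line bundle of degree $2$ is $\mo(q_1+q_2)$, and the canonical bundle corresponds to the conjugate diagonal. Thus $C\times J^2$ is dominated by $\times^3C$ (point, $q_1$, $q_2$), which is birational away from the conjugate-diagonal locus. Excision decomposes $\times^3_{M_{2,w}}C_{2,w}$ into the open complement in Lemma \ref{lemw:cxj}, whose Chow ring is $\Q$, together with the diagonals $\Delta_{ij}$ and conjugate diagonals $\tilde\Delta_{ij}$. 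Each of these strata is isomorphic to $\times^2 C_{2,w}$ or $C_{2,w}$ (minus smaller strata), and the same procedure applies inductively. For $C_{2,w}$ and $\times^2$, the analogous Chow-triviality of open parts follows as in \cite[Lemma 4.8]{shubham-r2}. Hence $A^*$ upstairs is generated by fundamental classes of (conjugate) diagonals, the Weierstrass loci $\{q_i=w\}$, and their intersections, pushed forward to $C\times J$.

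\textbf{Matching generators, and the main obstacle.} Each generator must be shown to be a pullback. Classes of diagonals and conjugate diagonals are defined over $M_2$ and are therefore pulled back. The main obstacle is the classes involving the marked Weierstrass point $w$, such as the locus $\{q_i = w\}$, or $\T$ normalized using $w$. Here I use that $6[w]$ summed over the Galois orbit equals the pullback of the ramification divisor, which is expressible via $K_\pi$ and $\tilde\Delta_{ii}$. More precisely, on a single curve the class of $w$ equals $\frac12K$ in rational Chow, and this holds relatively up to classes pulled back from $M_{2,w}$. Since $A^*(M_{2,w})=\Q$ (as $M_{2,w}$ is a quotient of an open subset of affine space), these correction terms vanish. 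So $[w]=\frac12 K_\pi$ on $C_{2,w}$, and similarly for the $w$-normalized theta divisor in terms of $K$-normalized classes. This expresses every $w$-dependent generator via $K_\pi$ and diagonal classes, giving surjectivity and hence the isomorphism.
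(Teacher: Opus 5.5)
\textbf{Verdict: the even-degree case works in outline, but the odd-degree case has a genuine gap.}

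\textbf{What works.} Your injectivity argument is the paper's. Your even-degree argument is sound in outline. After a $K$-twist, the map $(x,q_1,q_2)\mapsto(x,\mo(q_1+q_2))$ is the base change of a map over $M_2$, so proper pushforward commutes with pullback along $M_{2,w}\to M_2$. Lemma \ref{lemw:cxj} with excision then reduces everything to stratum classes on $\times^3_{M_{2,w}}C_{2,w}$. These are pullbacks, because the (conjugate) diagonals are defined over $M_2$ and $[w]=\frac12K_\pi$ (from $A^1(M_{2,w})=0$). Two small corrections. First, that map is generically $2:1$, not birational; this is harmless rationally. Second, the component of $\tilde\Delta_{ii}$ where $q_i$ is one of the five unmarked Weierstrass points is not $\times^2C_{2,w}$. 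It lives over the degree-$5$ cover marking a second Weierstrass point $w'$, so you also need Chow-triviality of the open strata over that cover, where $[w']=\frac12K_\pi$ again holds.

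\textbf{The gap: odd $d$.} Over $M_2$ you can only twist by powers of $K$, which preserves the parity of $d$. So your reduction leaves $d=1$ as a separate case, and you reach it only through the $w$-twist $J^1_{2,w}\simeq J^2_{2,w}$, which does not descend. Your generators are then translates by $-w$ of the degree-$2$ ones. For example, the small diagonal $x=q_1=q_2$ is defined over $M_2$, but it maps to the codimension-two cycle $\{(x,\mo(2x-w))\}$. This cycle depends on $w$ through translation by a section of the Jacobian, not through intersection with the divisor $w$ on $C_{2,w}$. So $[w]=\frac12K_\pi$ says nothing about it. Your phrase ``similarly for the $w$-normalized theta divisor'' is a seesaw statement, valid only for divisors.

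\textbf{What is needed.} You must show that replacing $w$ by another Weierstrass point $w'$ leaves these classes unchanged in the family, not just fiberwise. This replacement is translation by the $2$-torsion section $\mo(w'-w)$. Proving invariance requires a relative input such as Deninger--Murre: a torsion section of an abelian scheme has the rational class of the zero section, so it acts trivially by translation. Granting that, $w$-independence gives $f^*f_*\gamma=6\gamma$, so $\gamma$ is a pullback.

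\textbf{A simpler fix.} Use a $w$-free parametrization in odd degree, e.g.\ $(x,q_1,q_2)\mapsto(x,\mo(x+q_1+q_2))$ onto $C\times J^3$. It is defined over $M_2$, so your even-degree argument applies verbatim. This is in line with the paper, which treats $d=3$ directly via Lemma \ref{lemw:cxj} and \cite[Theorem 4.4]{shubham-r2}. Conversely, the paper reaches the other degrees by the $w$-twist, so your $K$-twisted degree-$2$ argument is a useful complement for even $d$.
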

\begin{proof}
	The pullback map is injective due to the finiteness, and surjectivity of the morphism
	$C_{2,w}\times_{M_{2,w}}J^d_{2,w}\ri C_2\times_{M_2}J^d_2$. 
	For $d=3$, the surjectivity of the pullback map follows from Lemma \ref{lemw:cxj}, and \cite[Theorem 4.4]{shubham-r2}.\\
	Using the Weierstrass section of $\mathcal{C}_{2,w}\ri\mathcal{M}_{2,w}$, we have isomorphisms $C_{2,w}\times_{M_{2,w}}J^d_{2,w}\simeq C_{2,w}\times_{M_{2,w}}J^3_{2,w}$ for all $d\in\Z$. 
	Hence, we have the claimed isomorphisms of pullback maps for all values of $d\in\Z$.
\end{proof}
\begin{lemma}\label{lem:c4}
	We have 
	$$A^*(\times_{M_{2,w}}^4C_{2,w}\setminus (\cup_{1\leq i<j\leq 4}\Delta_{ij}\cup_{1\leq i\leq j\leq 4}\tilde{\Delta}_{ij})) = \Q.$$
\end{lemma}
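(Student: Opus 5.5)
The plan is to run the argument of Lemma \ref{lemw:cxj} again with one more marked point. Again we use the parametrisation of \cite[\S 2.4]{casnati}, now for genus two curves with four distinct marked points $(C,p_1,p_2,p_3,p_4)$. None of the $p_i$ are Weierstrass points, and no two are conjugate under the hyperelliptic involution. The plan has three steps: build an explicit affine parameter space, cover the open locus in $\times^4_{M_{2,w}}C_{2,w}$ by it with a finite surjective map, and conclude Chow triviality via \cite[Remark 2.2]{vakil}.

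\textbf{Step 1: the parameter space.} As before, the linear system $|2p_1+p_2+p_3|$ embeds $C$ as a plane quartic with a single node or cusp $N$. We rigidify by $N\mapsto[1:0:0]$, $p_1\mapsto[0:1:0]$, $p_2\mapsto[0:0:1]$, $p_3\mapsto[0:1:1]$, so the equation lies in the affine space $V$ with coordinates $\la_1,\dots,\la_6$. The fourth point $p_4$ must lie on the quartic. Away from the removed diagonals and conjugate diagonals, it is not on the lines through $N$ and $p_1,p_2,p_3$, and is not the node. Write $p_4=[a:b:1]$ with $b\neq 0,1$, or use whichever affine chart is convenient. The quartic equation is linear in $\la_1,\dots,\la_6$ with coefficient of $\la_1$ equal to $a^2b^2$, and this coefficient is nonzero on our locus. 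So the incidence condition $p_4\in C$ can be solved uniquely for one of the $\la_i$, say $\la_6$ (coefficient $a$) or $\la_1$, in terms of the others and $(a,b)$. Hence the space of pairs (equation, fourth point) is an open subset of an affine space in the coordinates $(\la_1,\dots,\la_5,a,b)$, after eliminating one $\la$.

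\textbf{Step 2: the Weierstrass section.} A Weierstrass point is a root $c$ of \eqref{c-3}, and \eqref{c-3} is linear in $\la_1$ with coefficient $-4c^2(c-1)\neq 0$ for $c\neq 0,1$. The key point is to arrange the two eliminations compatibly. I will solve the incidence condition for $\la_6$: its coefficient in the quartic is $a$, which is nonzero since $p_4$ is not on the line $x_1=0$ through $p_1,p_2$. Substituting this into \eqref{c-3} leaves an equation still linear in $\la_1$. Its $\la_1$-coefficient is $-4c^2(c-1)$ plus a correction term coming from $\la_6$, which depends on $a,b,c$. Then solve for $\la_1$ on the open locus where that coefficient is nonzero. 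The result is an open subset $U''\subset\mathbb{A}^7$ with coordinates $(c,\la_2,\dots,\la_5,a,b)$. If the combined coefficient vanishes identically, I instead solve \eqref{c-3} for $\la_1$ first and the incidence condition for whichever remaining $\la_i$ has nonvanishing coefficient. Checking that some such choice works is the main obstacle. Everything else is formal.

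\textbf{Step 3: conclusion.} Over $U''$, the blowup construction of \cite[Lemma 3.4]{shubham-r2} gives a family of genus two curves with four marked sections and a Weierstrass section from $c$. This yields a finite surjective map
$$U''\ri \times_{M_{2,w}}^4C_{2,w}\setminus (\cup_{1\leq i<j\leq 4}\Delta_{ij}\cup_{1\leq i\leq j\leq 4}\tilde{\Delta}_{ij}).$$
It is finite because the choices of rigidification and of the root $c$ are finite. It is surjective because every point of the target admits such an embedding. Since $U''$ is an open subset of affine space, $A^*(U'')=\Q$. By \cite[Remark 2.2]{vakil}, the Chow ring of the target is $\Q$, which is the claim.
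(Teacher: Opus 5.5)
There is a genuine gap in Step 1, in the assertion that off the removed loci $p_4$ ``is not the node.'' For the system $|2p_1+p_2+p_3|$, the two preimages $q,q'$ of the node satisfy $q+q'\sim 2p_1+p_2+p_3-K_C$. For $p_1,p_2,p_3$ this causes no trouble. They lie on the line $x_1=0$, and if that line passed through the node its pullback would contain $q+q'$, forcing a conjugate pair among the $p_i$.

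But $p_4$ is not on that line. The condition $p_4\in\{q,q'\}$ is divisorial: for general $(C,w,p_1,p_2,p_3)$ there are exactly two such $p_4$. This divisor $Z$ is not contained in the union of the $\Delta_{ij}$ and $\tilde{\Delta}_{ij}$. Along $Z$ the point $p_4\in C$ is not determined by its image in $\mathbb{P}^2$, and the incidence equation is vacuous, since every member of $V$ vanishes at $[1:0:0]$. No affine chart $(a,b)$ sees these points.

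So, writing $Y$ for the target of the lemma, your map $U''\ri Y$ misses $Z$. It is neither surjective nor finite, and \cite[Remark 2.2]{vakil} only yields $A^*(Y\setminus Z)=\Q$. By excision you would still have to show that $[Z]$ and all classes pushed forward from $Z$ vanish in $A^*(Y)$, and the proposal does not address this.

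Step 2 is also wrong as written. The incidence equation gives $\la_6=-ab^2\la_1+(\text{terms without }\la_1)$. Since $\la_6$ enters \eqref{c-3} squared, the substituted equation is quadratic in $\la_1$ with leading coefficient $a^2b^4\neq 0$. That gives a double cover, not a rational parametrisation. (Also, the $\la_1$-coefficient in \eqref{c-3} is $4c^3(c-1)$, not $4c^2(c-1)$.)

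Your fallback does work. Solving \eqref{c-3} for $\la_1$ and substituting makes the incidence equation linear in $\la_2$ with coefficient $a^2b(c-b)/c$. This is nonzero, since $b=c$ would force $p_4=w$. But it does not repair the problem at the node.

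The paper sidesteps both issues by embedding with $|p_1+p_2+p_3+p_4|$ instead. Then all four points lie on $x_1=0$, and none can map to the node: otherwise $q+q'=p_i+p_j$ and the remaining two points would be conjugate. Moreover $p_4=[0:\la_7:1]$ with $\la_7$ a free coordinate, so there is no incidence condition to eliminate. The Weierstrass equation \eqref{c-4} is linear in $\la_1$ with coefficient $4c^3(c-1)(c-\la_7)\neq 0$. This gives $U'\subset\mathbb{A}^7$ in the coordinates $(c,\la_2,\dots,\la_7)$ directly.
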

\begin{proof}
A genus two curve with four distinct marked points $(C,p_1,p_2,p_3,p_4)$, with none of the $p_i$’s Weierstrass or conjugates under the hyperelliptic involution can be embedded as a plane quartic with a single nodal/cuspidal singularity given by the linear system $|p_1+p_2+p_3+p_4|$.\\
We rigidify the embedding further by setting the images of the node $N\mapsto  [1 : 0 : 0],p_1\mapsto [0 : 1 : 0],p_2\mapsto [0 : 0 : 1],p_3 \mapsto [0 : 1 : 1]$. 
The equation of the image is then an element of the space
$$V := \{\la_1x_1^2x_2^2+\la_2x_1^2x_2x_3+\la_3x_1^2x_3^2+x_1x_2^3+\la_4x_1x_2^2x_3+\la_5x_1x_2x_3^2+\la_6x_1x_3^3+ x_2x_3(x_2-x_3)(x_2-\la_7x_3)\}$$
 We find images of Weierstrass points under this map. Weierstrass points correspond to lines $Z(x_2-cx_3)$ for $c\in \K$ tangent to the quartic. Therefore, $c$ must be a root of the polynomial 
 \begin{equation}\label{c-4}(c^3+c^2\la_4+c\la_5+\la_6)^2 = 4c(c-1)(c-\la_7)(c^2\la_1+c\la_2+\la_3)\tag{$E_{(C,p_1,p_2,p_3,p_4)}$}\end{equation} 
Let $U\subset \mathbb{A}_\K^7$ be the open subset of points parametrising ordered tuples $(\la_i)_{1\leq i\leq 7}$ corresponding to equations of nodal/cuspidal plane quartics.
Given any $\{\la_i\}_{2\leq i\leq 7}\in \K,c\in \K\setminus\{0,1,\la_7\}$, we can uniquely determine $\la_1$ so that an equation in the form \ref{c-4} is satisfied.\\
Let $U'\subset \mathbb{A}_\K^7$ be the corresponding open subset parametrising ordered tuples $(c,\la_2,\cdots,\la_7)$ such that the associated tuple $(\la_i)_{1\leq i\leq 6}$ is contained in $ U$.
Using the blowup construction in \cite[Lemma 3.4]{shubham-r2}, we have a family of genus two curves defined over $U'$, with a section of Weierstrass point defined using $c$.\\
Consequently, we have a finite, surjective map $U'\ri \times_{M_{2,w}}^4C_{2,w}\setminus (\cup_{1\leq i<j\leq n}\Delta_{ij}\cup_{1\leq i\leq j\leq n}\tilde{\Delta}_{ij})$. Hence, we have the claimed Chow triviality by \cite[Remark 2.2]{vakil}.
\end{proof}
\begin{definition}
	Let $S_{0}^*$ be the image of the pullback $ A^*(J^0_{2,w}\times_{M_2}J^0_{2,w})\ri A^*(J_*(0)\times_{M_*}J_*(0))$.
\end{definition}
\begin{theorem}\label{jxj}
  Let $\pi_1,\pi_2$ be the projections onto first and second factors of $J_*(0)\times_{M_*}J_*(0)$, and let $\mu:J_*(0)\times_{M_*}J_*(0)\ri J_*(0)$ be the twisting map. We have
  \begin{gather*}S_{0}^*\simeq \Q[\pi_1^*\T_0,\pi_2^*\T_0, \mu^*\T_0]/
    (\pi_1^*\T_0^3,\pi_2^*\T_0^3, \mu^*\T_0^3,(\mu^*\T_0-\pi_2^*\T_0)\pi_1^*\T_0^2,(\mu^*\T_0-\pi_1^*\T_0)\pi_2^*\T_0^2, \\
     \mu^*\T_0^2(\pi_1^*\T_0-\pi_2^*\T_0),\mu^*\T_0^2\pi_1^*\T_0+\pi_1^*\T_0\pi_2^*\T_0(\pi_1^*\T_0+\pi_2^*\T_0)-2\mu^*\T_0\pi_1^*\T_0\pi_2^*\T_0
	).\end{gather*}
\end{theorem}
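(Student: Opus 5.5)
The plan has two halves: first show that $S_0^*$ is generated by the three classes $\pi_1^*\T_0,\pi_2^*\T_0,\mu^*\T_0$, and then show that the listed relations hold and are the only ones.

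\textbf{Generation.} I will write $J^0_{2,w}\times_{M_2}J^0_{2,w}$ (over the Weierstrass-marked family) as a quotient of products of universal curves. Using the marked Weierstrass point $w$, a degree zero line bundle on a genus two curve can be written as $\mathcal{O}(p_1+p_2-2w)$. This gives a map
$$\times^4_{M_{2,w}}C_{2,w}\ri J^0_{2,w}\times_{M_{2,w}}J^0_{2,w},\qquad (p_1,p_2,p_3,p_4)\mapsto\bigl(\mathcal{O}(p_1+p_2-2w),\,\mathcal{O}(p_3+p_4-2w)\bigr),$$
which is generically finite and surjective. I will stratify the source by the diagonals $\Delta_{ij}$, the conjugate diagonals $\tilde\Delta_{ij}$, and the loci where some $p_i$ is Weierstrass. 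On the open stratum, Lemma \ref{lem:c4} says the Chow ring is $\Q$. By excision, together with Lemma \ref{lemw:cxj} and Corollary \ref{cor:cj} for the lower-dimensional strata (these are products of fewer curves, or $C\times J$ after collapsing $\tilde\Delta_{ij}$, which maps to the zero section), the Chow groups of the target are spanned by pushforwards of fundamental classes of the boundary strata. Each such pushforward will be identified with a polynomial in theta pullbacks:
\begin{itemize}
\item the images of $\tilde\Delta_{12}$ and $\tilde\Delta_{34}$ give $0\times J$ and $J\times 0$, i.e.\ $\pi_1^*[0]$ and $\pi_2^*[0]$;
\item the image of a locus where $p_i$ is Weierstrass gives a pullback of $\T_0$, namely $\pi_1^*\T_0$ or $\pi_2^*\T_0$;
\item the images of $\tilde\Delta_{13}$-type loci give the antidiagonal or twisted theta divisors, i.e.\ $\mu^*\T_0$ and its variant under $-1$.
\end{itemize}
On $J_*(0)$ one has $[0]=\T_0^2/2$ and $[-1]^*\T_0=\T_0$, because $\T_0$ is symmetric, being defined via a Weierstrass point. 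Hence the pullback of $A^*$ lands in the subring generated by $\pi_1^*\T_0,\pi_2^*\T_0,\mu^*\T_0$. Since $A^*(J^0_{2,w})$ is known to be $\Q[\T_0]/(\T_0^3)$ from \cite{larson-24}, this matches the expected generators.

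\textbf{Relations.} The cubic vanishings $\pi_i^*\T_0^3=0$ and $\mu^*\T_0^3=0$ are pullbacks of $\T_0^3=0$. Relations such as $(\mu^*\T_0-\pi_2^*\T_0)\pi_1^*\T_0^2=0$ follow from the restriction formula: $\pi_1^*\T_0^2=2[0\times J]$, and on $0\times J$ the map $\mu$ restricts to $\pi_2$. The same argument, using $\mu^*\T_0^2=2[\text{antidiagonal}]$ (the locus $\mu=0$) on which $\pi_1=-\pi_2$, together with symmetry of $\T_0$, gives $\mu^*\T_0^2(\pi_1^*\T_0-\pi_2^*\T_0)=0$. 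The last relation is degree three, which equals the fiber dimension plus a cycle on $M_*$. I will derive it from the theorem of the cube / Mumford's relation
$$\mu^*\T+m^*\T=2\pi_1^*\T+2\pi_2^*\T,$$
where $m$ is the difference map. I will multiply this by $\pi_1^*\T_0\pi_2^*\T_0$ and use that the top intersection is determined by fiber degree, since $A^*(M_*)=\Q$ for $M_*\simeq M_{0,6}$.

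\textbf{Completeness.} To show there are no further relations, I will restrict to a single fiber $J\times J$ over a point of $M_*$ and compute the intersection pairing of the candidate monomial basis with cohomology. The presented ring has Hilbert series determined by the quotient, and I will check nondegeneracy of the pairing numerically using $\T^2=2$ on the Jacobian and the standard intersection numbers of $\T_1,\T_2,\mu^*\T$ on an abelian surface squared.

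The main obstacle is the generation step. It requires verifying that every boundary stratum's image class is expressible in these three generators, particularly the classes supported on $C\times J$ strata coming from Corollary \ref{cor:cj}, where the Abel--Jacobi curve classes must be rewritten via Poincaré's formula $[C]=\T^{g-1}/(g-1)!$.
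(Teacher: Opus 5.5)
Your overall plan is the paper's: the map $\Phi$ from four copies of $C_{2,w}$, excision along diagonals and conjugate diagonals with Lemma \ref{lem:c4} on the open part, the restriction arguments for $(\mu^*\T_0-\pi_2^*\T_0)\pi_1^*\T_0^2$ and $\mu^*\T_0^2(\pi_1^*\T_0-\pi_2^*\T_0)$, and linear independence of the 14 surviving monomials in $H^*(J_C\times J_C)$. The gap is the last relation. Write $a=\pi_1^*\T_0$, $b=\pi_2^*\T_0$, $m=\mu^*\T_0$, and let $d$ be the difference map. Mumford's relation says exactly $d^*\T_0=2a+2b-m$. Multiplying it by $ab$ therefore gives the identity $mab+(2a+2b-m)ab=2a^2b+2ab^2$ in $\Q[a,b,m]$, which carries no information. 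To get anything you would need an independent value of $d^*\T_0\cdot ab$, and symmetry does not supply one: the involution $(L_1,L_2)\mapsto(L_1,L_2^{-1})$ swaps $m$ and $d^*\T_0$, but it does not fix $mab$, already in the cohomology of a fibre. Moreover the relation lives in codimension $3$ on a family of relative dimension $4$, so it is a relation among relative $1$-cycles, and no ``top intersection is determined by fibre degree'' argument applies. You also cannot check it in cohomology, because your completeness step needs every listed relation to hold in $S_0^*$ itself.

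The fix is within your own toolkit: cube Mumford's relation instead of multiplying it. Since $\T_0^3=0$, you get $0=d^*\T_0^3=(2a+2b-m)^3$. Reducing with $a^3=b^3=m^3=0$, $a^2m=a^2b$, $b^2m=ab^2$ and $bm^2=am^2$ leaves $12\bigl(am^2+ab(a+b)-2abm\bigr)$, which is twelve times the required relation. The paper argues geometrically instead. It shows that $\mu^{-1}\T_0\cap\pi_1^{-1}\T_0\cap\pi_2^{-1}\T_0$ is the union of $\{(\mo(x-\s_1),\mo(\s_1-x))\}$, $\{(\mo,\mo(y-\s_1))\}$ and $\{(\mo(z-\s_1),\mo)\}$, with classes $\frac12 m^2a$, $\frac12 a^2b$ and $\frac12 ab^2$.

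There are also two smaller inaccuracies in your generation step. First, $\Phi_*[\tilde\Delta_{12}]$ is zero, since the fibres of $\tilde\Delta_{12}\to 0\times J$ are curves. What you need is all of $\Phi_*A_*(\tilde\Delta_{12})$, which is spanned by $a^2,a^2b,a^2b^2$. Second, $\Phi_*[\tilde\Delta_{13}]$ restricts to $2\T$ on the fibres of both projections, so it is not $\mu^*\T_0$. The paper records it as $m+a+b$, and records $d^*\T_0+a+b$ for $\Delta_{13}$. The antidiagonal and the diagonal appear only as images of intersections such as $\tilde\Delta_{13}\cap\tilde\Delta_{24}$ and $\Delta_{13}\cap\Delta_{24}$. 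Neither point changes membership in the subring, but these classes have to be computed rather than named.
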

\begin{proof}
	By \cite[Theorem 3.3]{shubham-r2}, we have that $\pi_1^*\T_0,\pi_2^*\T_0,\mu^*\T_0\in S_{0}^*$.
The pullback map $A^*(J^0_2\times_{M_2}J^{0}_2)\hr A^*(J_*(0)\times_{M_*}J_*(0))$ is injective by \cite[Remark 2.2]{vakil}.
We show that the image of the pullback is generated by $\pi_1^*\T_0,\pi_2^*\T_0,\mu^*\T_0$, and that the claimed ideal of relations generates the ideal of relations in $S^*_0$.
We have the canonical map 
$$\times_{M_{2,w}}^4C_{2,w}\xrightarrow{\Phi} J^2_{2,w}\times_{M_{2,w}}J^2_{2,w}\simeq J^0_{2,w}\times_{M_{2,w}}J^0_{2,w}.$$
given by the sum of $(1,2)$ and $(3,4)$ coordinates.\\
From the definition in $\Phi$, and \cite[Theorem 3.3]{shubham-r2}, it follows that the image $\Phi_*(A_*(\tilde{\Delta}_{12}))$ is linearly spanned by $\{\pi_1^*\T_0^2,\pi_1^*\T_0^2\pi_2^*\T_0,\pi_1^*\T_0^2\pi_2^*\T_0^2\}$.
Similarly, the image $\Phi_*(A_*(\tilde{\Delta}_{34}))$ is linearly spanned by $\{\pi_2^*\T_0^2,\pi_2^*\T_0^2\pi_1^*\T_0,\pi_2^*\T_0^2\pi_1^*\T_0^2\}$. An inductive argument using \cite[Lemma 4.7 and Theorem 4.4]{shubham-r2} shows that $S_0^*$ is generated by the cycles $\{\Phi_*([\Delta_{ij}]),\Phi_*([\tilde{\Delta}_{ij}])\}_{1\leq i\leq j\leq 4}$ and their transverse intersections. Therefore, its enough to show that each of these classes are in the claimed subring.\\
Let $d: J_*(0)\times_{M_*}J_*(0)\ri J_*(0)$ be the difference map given by $(\lb_1,\lb_2)\mapsto \lb_1\otimes\lb_2^\vee$.\\
Restricting to a fiber of $\pi_2$, and applying the theorem of the square, we have that there exists a rational $k$ such that 
\begin{equation}\label{eq:mu,d}
2\pi_1^*\T_0+k\pi_2^*\T_0 = \mu^*\T_0+d^*\T_0\implies k=2\implies 2(\pi_1^*\T_0+\pi_2^*\T_0) = \mu^*\T_0+d^*\T_0
\end{equation}
since $\mu^*\T_0+d^*\T_0$ is invariant under the transposition involution on $J_*(0)\times_{M_*}J_*(0)$.
Similarly, we have 
\begin{gather*}\Phi_*([\Delta_{ij}]) = d^*\T_0+\pi_1^*\T_0+\pi_2^*\T_0,
\Phi_*([\tilde{\Delta}_{ij}])= \mu^*\T_0+\pi_1^*\T_0+\pi_2^*\T_0\forall i\in \{1,2\},j\in\{3,4\}.\end{gather*}
The following equations show that the claimed subring is equal to $S_0^*$:
$$\Phi_*([\Delta_{12}]) = 4\pi_1^*\T_0,\Phi_*([\Delta_{34}]) = 4\pi_2^*\T_0,\Phi_*([\tilde{\Delta}_{ii}]) = \begin{cases}3\pi_1^*\T_0 \text{ if }i\in\{1,2\}\\3\pi_2^*\T_0\text{ if }i\in \{3,4\}\end{cases}.$$
Let $\Delta_J:= J_*(0)\hr J_*(0)\times_{M_*}J_*(0)$ be the diagonal. Using (\ref{eq:mu,d}), we have 
\begin{equation}\label{eq:delta-j}[\Delta_J] = \dfrac{1}{2}d^*\T_0^2 = \dfrac{1}{2}(2(\pi_1^*\T_0+\pi_2^*\T_0)-\mu^*\T_0)^2.\end{equation}
Therefore, $S_0^*$ is generated by $\pi_1^*\T_0,\pi_2^*\T_0,\mu^*\T_0$.
We verify the claimed relations. It is clear that $$\pi_1^*\T_0^3 = \pi_2^*\T_0^3=\mu^*\T_0^3 = 0.$$
Furthermore, we have $$\mu^*\T_0\pi_1^*\T_0^2 = \pi_{2}^*\T_0\pi_1^*\T_0^2 = 2[\{(\mo,\mo(x-\s_1))|x\in \C_*\}],\mu^*\T_0^2\pi_1^*\T_0 = \mu^*\T_0^2\pi_2^*\T_0 = 2[\{(x-\s_1,\s_1-x)|x\in\C_*\}].$$
The last claimed relation follows from the observation that $\mu^*\T_0\pi_1^*\T_0\pi_2^*\T_0$ has three integral components:
$$\underbrace{[\{(\mo(x-\s_1,\s_1-x))|x\in \C_*\}]}_{\dfrac{1}{2}\mu^*\T_0^2\pi_1^*\T_0},\underbrace{[\{(\mo,\mo(y-\s_1))|y\in\C_*\}]}_{\dfrac{1}{2}\pi_1^*\T_0^2\pi_2^*\T_0},\underbrace{[\{(\mo(z-\s_1),\mo)|z\in\C_*\}]}_{\dfrac{1}{2}\pi_2^*\T_0^2\pi_1^*\T_0}.$$
To see that the claimed relations generate the ideal, we observe that the claimed quotient has the following linear basis, \text{where we identify }$(\pi_1,\pi_1^*\T_0), (\pi_2,\pi_2^*\T_0),(\mu,\mu^*\T_0)$:
\begin{gather*}\{1,\underbrace{\pi_1,\pi_2,\mu}_{A^1}, \underbrace{\pi_1^2,\pi_2^2,\mu^2,\pi_1\pi_2,\pi_1\mu,\pi_2\mu}_{A^2},\underbrace{\mu^2\pi_1, \pi_1\pi_2^2,\pi_1^2\pi_2}_{A^3},\underbrace{\mu^2\pi_1\pi_2}_{A^4}\}.\end{gather*}
The claim now follows from the observation that the cycles above are linearly independent in $H^*(J^0_C\times J^0_C)$ for a genus two curve $C$.
\end{proof}
\begin{corollary}
	For all $d,d'\in\Z$, we have $$A^*(J^d_2\times_{M_2}J^{d'}_2)\simeq A^*(J^0_2\times_{M_2}J^0_2).$$
\end{corollary}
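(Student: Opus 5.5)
The plan is to build an isomorphism of spaces over $M_2$, or failing that over a finite cover, between $J^d_2\times_{M_2}J^{d'}_2$ and $J^0_2\times_{M_2}J^0_2$, and then pass to rational Chow groups. The obstruction is that $J^d_2\ri M_2$ has no section of degree $d$ when $d$ is odd, so translation is not available directly. I would work with the Weierstrass cover instead. Over $M_{2,w}$ the marked Weierstrass section $w$ gives isomorphisms $J^d_{2,w}\simeq J^0_{2,w}$ via $\lb\mapsto\lb(-dw)$, exactly as in the proof of Corollary \ref{cor:cj}. Taking the fibre product over $M_{2,w}$ then gives
$$J^d_{2,w}\times_{M_{2,w}}J^{d'}_{2,w}\simeq J^0_{2,w}\times_{M_{2,w}}J^0_{2,w}.$$
Independently of the marked point, multiplication by $K$ gives isomorphisms $J^d_2\simeq J^{d+2}_2$ over $M_2$. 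So it is enough to compare the parities of $d$ and $d'$.

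The second step is to descend from $M_{2,w}$ (or from $M_*$) back to $M_2$. Since $M_*\ri M_2$ is finite and surjective, \cite[Remark 2.2]{vakil} makes the pullback
$$A^*(J^d_2\times_{M_2}J^{d'}_2)\hr A^*(J_*(d)\times_{M_*}J_*(d'))$$
injective, with image the invariants under the monodromy group $S_6$ permuting the Weierstrass points. The translation isomorphism $\tau_{d,d'}:J_*(d)\times J_*(d')\ri J_*(0)\times J_*(0)$, which twists by $-d\s_1$ and $-d'\s_1$, is not $S_6$-equivariant. However, its failure to be equivariant is a translation by $2$-torsion points $\s_i-\s_1$. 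Translation by torsion points acts trivially on rational Chow groups of the images described in Theorem \ref{jxj}, because $\T_0$ and its translates by torsion points are algebraically equivalent on the fibres. The relevant classes ($\pi_1^*\T_0,\pi_2^*\T_0,\mu^*\T_0$ and the classes $\Phi_*$ of diagonals) are all built from divisors of the form $\s_i+x$.

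Rather than argue about invariants abstractly, I would run the proof of Theorem \ref{jxj} again for arbitrary degrees. Replace $\Phi$ by a map $\times^{n}_{M_{2,w}}C_{2,w}\ri J^d_{2,w}\times J^{d'}_{2,w}$ that sums a suitable number of coordinates, twisted by multiples of $w$ and $K$. Lemma \ref{lem:c4} and the inductive argument via \cite[Lemma 4.7, Theorem 4.4]{shubham-r2} then show that the image in $A^*(J_*(d)\times J_*(d'))$ is generated by the pullbacks $\pi_1^*\T_d,\pi_2^*\T_{d'},\mu^*\T_{d+d'}$. Under $\tau_{d,d'}$ these correspond to $\pi_1^*\T_0,\pi_2^*\T_0,\mu^*\T_0$ and satisfy the same relations; the linear-independence check on a single fibre $J_C\times J_C$ is unchanged. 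This identifies both Chow rings with the same presented ring $S_0^*$, which gives the isomorphism.

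The main obstacle is the descent step: making sure that the image of $A^*(J^d_2\times_{M_2}J^{d'}_2)$ is exactly the subring generated by those three theta pullbacks and nothing smaller. This is where surjectivity from $\times^4 C_{2,w}$ (Lemma \ref{lem:c4}) has to be combined with the $M_2$-level classes. I would first try to show directly that the classes of $\Phi$-diagonals in degrees $(d,d')$ are already defined on $J^d_2\times_{M_2}J^{d'}_2$, in the same way as the treatment of $C_2\times J^d_2$ in Corollary \ref{cor:cj}.
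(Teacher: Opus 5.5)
Your first step, the $w$-twist isomorphism $J^d_{2,w}\times_{M_{2,w}}J^{d'}_{2,w}\simeq J^0_{2,w}\times_{M_{2,w}}J^0_{2,w}$, is the paper's starting point too. However, the step you call the main obstacle is left unresolved, and it is the actual content of the corollary. Injectivity of pullback along a finite surjective cover only embeds $A^*(J^d_2\times_{M_2}J^{d'}_2)$ and $A^*(J^0_2\times_{M_2}J^0_2)$ into two isomorphic rings; you still have to show that the two images correspond.

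Re-running the $\Phi$-argument in degree $(d,d')$ cannot do this. $\Phi$ uses the section $w$, so it only bounds the $M_{2,w}$-level image from above by the subring generated by $\pi_1^*\T_d,\pi_2^*\T_{d'},\mu^*\T_{d+d'}$. That upper bound you already get for free by transporting Theorem \ref{jxj} along the $w$-twist. Your fallback, showing that the $\Phi$-diagonal classes are defined over $M_2$, reduces via the formulas in the proof of Theorem \ref{jxj} to the same question about these three classes. Your justification of the invariants route also fails: that $\T_0$ and its torsion translates are algebraically equivalent on fibres says nothing about equality of their classes in the Chow ring of the family.

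The missing lower bound is that $\pi_1^*\T_d,\pi_2^*\T_{d'},\mu^*\T_{d+d'}$ lie in the image of $A^*(J^d_2\times_{M_2}J^{d'}_2)$. The maps $\pi_1,\pi_2$ and $\mu\colon J^d_2\times_{M_2}J^{d'}_2\ri J^{d+d'}_2$ are all defined over $M_2$. So it suffices that each $\T_e$ lies in the image of $A^1(J^e_2)\ri A^1(J_*(e))$. This is what the paper draws from \cite[Theorem 3.3]{shubham-r2} in the proof of Theorem \ref{jxj}, which is why its proof of the corollary can say that $S_0^*$ is also the image from $J^0_2\times_{M_2}J^0_2$.

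For odd $e$ this is immediate: since $2\s_1\sim K$, $\T_e$ is the image of the canonical map $C_2\ri J^e_2$, $p\mapsto p+\frac{e-1}{2}K$. For even $e$ you could average $\{p-\s_i\}$ over the six Weierstrass sections. The equality of the six translates in $A^1(J_*(0))$ must then come from the right reason. For the translation $t_y$ by the $2$-torsion section $y=\s_i-\s_1$ one has $[2]\circ t_y=[2]$. Also $[2]^*$ is bijective on rational Chow groups of an abelian scheme, by the Beauville--Deninger--Murre decomposition. Hence $t_y^*=\mathrm{id}$.

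With the lower bound in hand, each pullback $A^*(J^d_2\times_{M_2}J^{d'}_2)\ri A^*(J^d_{2,w}\times_{M_{2,w}}J^{d'}_{2,w})$ is an isomorphism. Composing with the $w$-twist and the inverse of the $(0,0)$ case gives the corollary, which is exactly the paper's proof.
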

\begin{proof}
	From Theorem \ref{jxj}, we have that the pullback $A^*(J^d_2\times_{M_2}J^{d'}_2)\ri A^*(J^d_{2,w}\times_{M_{2,w}}J^{d'}_{2,w})$ is an isomorphism
due to the canonical isomorphism $J^d_2\times_{M_2}J^{d'}_2\simeq J^0_{2,w}\times_{M_{2,w}}J^0_{2,w}$. The claim now follows from the observation that $S_0^*$ coincides with the image of the pullback $A^*(J^0_2\times_{M_2}J^{0}_2)\ri A^*(J_*(0)\times_{M_*}J_*(0))$.
\end{proof}
\begin{rem}
	A motivic argument using \cite[Proposition 0.1]{bae-maulik} produces a more general proof for the above corollary for arbitrary genus.
\end{rem}
\begin{corollary}\label{cor:gerbe-jxj}
For all $d,d'\in \mathbb{Z}$, there exist $T_1,T_2\in A^1(\J^d_2\times_{\mathcal{M}_2}\J^{d'}_2)$ such that
$$A^*(\J^d_2\times_{\mathcal{M}_2}\J^{d'}_2)\simeq A^*(\jac^d_2\times_{M_2}\jac^{d'}_2)[T_1,T_2].$$
\end{corollary}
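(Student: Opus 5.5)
The plan is to realize $\J^d_2\times_{\mathcal{M}_2}\J^{d'}_2$ as a $\mathbb{G}_m\times\mathbb{G}_m$-gerbe over $\jac^d_2\times_{M_2}\jac^{d'}_2$, and then argue exactly as in the proof of Theorem \ref{sec:gerbe-chow}. Write $\nu_d:\J^d_2\ri \jac^d_2$ for the $\mathbb{G}_m$-rigidification of the universal Picard stack. Taking the fibre product over $\mathcal{M}_2$ gives a map
$$\nu_d\times\nu_{d'}:\J^d_2\times_{\mathcal{M}_2}\J^{d'}_2\ri \jac^d_2\times_{\mathcal{M}_2}\jac^{d'}_2.$$
This map is a gerbe banded by $\mathbb{G}_m^2$: one copy of $\mathbb{G}_m$ scales each of the two universal line bundles. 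On the target, the rational Chow ring agrees with that of the coarse space $J^d_2\times_{M_2}J^{d'}_2$, since the target is Deligne--Mumford. By the preceding corollary, that coarse Chow ring is computed by $A^*(J^0_2\times_{M_2}J^0_2)$.

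Next, I would factor $\nu_d\times\nu_{d'}$ as a composition of two $\mathbb{G}_m$-gerbes,
$$\J^d_2\times_{\mathcal{M}_2}\J^{d'}_2\ri \J^d_2\times_{\mathcal{M}_2}\jac^{d'}_2\ri \jac^d_2\times_{\mathcal{M}_2}\jac^{d'}_2 ,$$
and treat each step separately. At each step I apply the argument of \cite[Lemma 6.5]{larson-24} that was already used for $\nu_{2,3}$. A $\mathbb{G}_m$-gerbe $\mathcal{X}\ri X$ which admits a line bundle of weight one for the band has rational Chow ring $A^*(X)[T]$, where $T$ is the first Chern class of that weight-one line bundle.

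So at each step I need to produce a weight-one line bundle. My first choice is the determinant of cohomology of the relevant universal line bundle $\lb$, twisted by a suitable power of $\lb$ restricted along a section, or alternatively $\pi_*(\lb\cdot K_\pi)$-type classes. For the first factor this gives the class $T_1=\ka_{0,1,0}$ associated to $\lb_1$. For the second factor it gives $T_2$, the analogous class for $\lb_2$.

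The key point is that these classes have nonzero weight with respect to the respective $\mathbb{G}_m$. Their weight is $\deg(\lb_i K)$, or the Euler characteristic for the determinant. This may be zero for special $d$, and then I would choose another generator of the Picard group using \cite[Theorem A.1]{fringuelli}. Since we work with $\Q$-coefficients, any nonzero weight suffices: a line bundle of weight $w\neq 0$ makes the gerbe trivial rationally, so $\frac{1}{w}c_1$ plays the role of $T$.

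The main obstacle is making the \cite[Lemma 6.5]{larson-24} argument work over a relative Jacobian base rather than over $\mathcal{M}_2$. Concretely, I must check two things. First, the pullback of the gerbe along the first step is still a $\mathbb{G}_m$-gerbe over the intermediate stack. Second, the weight-one class exists on the intermediate stack, and not only on a single factor.

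I would handle this by pulling back via the projection $p_{\{i\}}$ to the $i$-th factor. The fibre product of gerbes is the pullback of the gerbe $\J^{d'}_2\ri\jac^{d'}_2$ along $p_{\{2\}}$, and line bundles pull back with the same weight. This reduces each step to the single-factor statement. Applying the single-factor statement twice then yields
$$A^*(\J^d_2\times_{\mathcal{M}_2}\J^{d'}_2)\simeq A^*(\jac^d_2\times_{M_2}\jac^{d'}_2)[T_1,T_2].$$
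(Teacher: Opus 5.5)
Your proposal is correct, but it takes a different route from the paper.

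\textbf{Comparison of the two routes.} The paper treats $\J^d_2\times_{\mathcal{M}_2}\J^{d'}_2\ri\jac^d_2\times_{M_2}\jac^{d'}_2$ in one step as a $\mathbb{G}_m\times\mathbb{G}_m$-gerbe. It invokes \cite[Lemma 3.1.1.8]{lieblich} to write the Chow ring as $A^*(\jac^d_2\times_{M_2}\jac^{d'}_2)\otimes A^*(B\mathbb{G}_m\times B\mathbb{G}_m)$. It then reads off $T_1,T_2$ from $A^*(B\mathbb{G}_m\times B\mathbb{G}_m)\simeq\Q[T_1,T_2]$ \cite[Example 4.3]{heinloth}.

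You instead factor the map into two $\mathbb{G}_m$-gerbes, each a base change of a single rigidification along a projection. You then trivialize each gerbe rationally using a class of nonzero weight, as in the \cite[Lemma 6.5]{larson-24} argument behind Theorem \ref{sec:gerbe-chow}. This works because $(\pi,L)$ exhibits the gerbe as a $\mu_w$-gerbe over $X\times B\mathbb{G}_m$, which is an isomorphism on rational Chow rings.

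The paper's route is shorter. It delegates the splitting to the cited gerbe lemma and produces $T_1,T_2$ only abstractly, from a trivialization over $B\mathbb{G}_m\times B\mathbb{G}_m$. Your route checks the needed rational triviality by hand. It also identifies the generators with tautological classes, which is convenient when these strata are later compared with twisted-kappa classes.

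\textbf{One correction.} The weight of $p_i^*\ka_{0,1,0}$ for the $i$-th $\mathbb{G}_m$ is $\deg\omega_\pi=2$, not $\deg(\lb_i\otimes\omega_\pi)$. This class is $c_1$ of the Deligne pairing $\langle\lb_i,\omega_\pi\rangle$, on which scaling $\lb_i$ by $\lambda$ acts by $\lambda^{\deg\omega_\pi}$. So the weight never vanishes, and you can take $T_i=\tfrac12 p_i^*\ka_{0,1,0}$ uniformly in $d,d'$ with no fallback to \cite[Theorem A.1]{fringuelli}. The option of restricting $\lb$ along a section is unavailable anyway, since the universal curve over $\mathcal{M}_2$ has no section.
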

\begin{proof}
It is well-known that $\J^d_2\ri\jac^d_2$ is a $\mathbb{G}_m$-gerbe. 
Consequently, $\J^d_2\times_{\mathcal{M}_2}\J^{d'}_2\ri \jac^d_2\times_{M_2}\jac^{d'}_2$ is a $\mathbb{G}_m\times \mathbb{G}_m$-gerbe.
Applying \cite[Lemma 3.1.1.8]{lieblich}, we have
$$A^*(\J^d_2\times_{\mathcal{M}_2}\J^{d'}_2)\simeq A^*(\jac^d_2\times_{M_2}\jac^{d'}_2)\otimes A^*(B\mathbb{G}_m\times B\mathbb{G}_m)\simeq A^*(J^d_2\times_{M_2}J^{d'}_2)\otimes A^*(B\mathbb{G}_m\times B\mathbb{G}_m).$$
Using \cite[Example 4.3]{heinloth}, there exist $T_1,T_2\in A^1(B\mathbb{G}_m\times B\mathbb{G}_m)$ such that
$$A^*(B\mathbb{G}_m\times B\mathbb{G}_m)\simeq \Q[T_1,T_2].$$
\end{proof}
\section{The filtration stack and the stratification}
To understand $A^*(\ve(r,d,g))$, it is crucial to construct stacks parameterizing extensions to apply results from \S\ref{sec:jxj}.
For a \textit{Harder-Narasimhan polygon}(HN-polygon) of type $t = (n_i,d_i)_i$, the stack of vector bundles with filtration of type $t$ is denoted by $\ve(r,d,g)^t$.
The stack $\ext^{\underline{d}}_{\underline{n}}(g)$ has been defined in \cite[Lecture 5]{heinloth} as the stack of filtered vector bundles of type ${t} = (\underline{n},\underline{d})$.
More precisely:
\begin{enumerate}
	\item For a scheme $S$, the objects are given by $\ext^{\underline{d}}_{\underline{n}}(g)(S):= (\C\ri S, \{\e_1\subset \e_2\subset\cdots\}| rk(\e_i) = n_i, deg(\e_i) = d_i)$ where $\C\ri S$ is a family of smooth curves of genus $g$, $\e_i$ are vector bundles on $C$. 
	\item This is an algebraic stack such that the forgetful morphism $\ext^{\underline{d}}_{\underline{n}}(g)\ri \ve(r,d,g)$ is representable.
\item Quotients of the filtration define the forgetful morphism $\ext^{\underline{d}}_{\underline{n}}(g)\ri \times_{M_g}^m\ve(n_i,d_i,g)$, where $m$ is the length of the filtration.
\end{enumerate}
For $\ve(2,d,g)$, the unstable HN-polygons are of the form $t = ((d_1,d),(1,2))$ for $d_1>d/2$. 
Consequently, the filtration stacks are isomorphic to the corresponding extension stacks of universal Picards. We have the morphisms
$$\theta_{d_1,d}:\ext^{(d_1,d)}_{(1,2)}(g)\ri\J^{d_1}_g\times_{\mathcal{M}_g}\J^{d-d_1}_g$$ 
such that $\theta_{{d_1},d}$ is a vector bundle stack of relative dimension $g-1+d-2d_1$ for all $d_1,d\in \Z$ \cite[Corollary 3.2]{prada}.

For genus two and degree three, we have   
$$\ext^{(d,3)}_{(1,2)}(2)\ri \ve(2,d,2)$$ is an immersion \cite[Proposition 5.3]{heinloth}.
The HN-stratification of $\ve(2,3,2)$ then gives the following lemma. 
\begin{lemma}\label{lem:strata}
We have the following stratification of $\ve(2,3,2)$:
$$\ve(2,3,2) = \U(2,3,2) \sqcup_{n\geq 2} \ext^{(n,3)}_{(1,2)}(2).$$
\end{lemma}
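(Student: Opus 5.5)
The plan is to use the Harder--Narasimhan stratification of $\ve(2,3,2)$ from \cite[Proposition 5.3]{heinloth}, applied fibrewise over $\mathcal{M}_2$.

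\textbf{Step 1: the HN filtration.} Let $\C\ri S$ be a family of genus two curves and $\E$ a rank two, degree three bundle on $\C$. Since $\gcd(2,3)=1$, on each geometric fibre $\E_s$ is either stable or unstable; it is never strictly semistable.

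If $\E_s$ is unstable, its HN filtration is $0\subset \lb\subset \E_s$. Here $\lb$ is the unique maximal destabilizing line subbundle, of degree $n>3/2$, so $n\geq 2$. The quotient $\E_s/\lb$ is a line bundle of degree $3-n$.

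Uniqueness of the HN filtration makes the type $t=((n,3),(1,2))$ a well-defined function of $s$. This gives a set-theoretic decomposition of $|\ve(2,3,2)|$ into the locus of stable bundles and the loci of HN type $n$, for $n\geq 2$.

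\textbf{Step 2: identifying the strata.} The stable locus is the open substack $\U(2,3,2)$, since stability is open in families. Each unstable locus is identified with $\ext^{(n,3)}_{(1,2)}(2)$ through the forgetful morphism, which by \cite[Proposition 5.3]{heinloth} is a locally closed immersion.

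To see that the forgetful morphism is a monomorphism onto the type-$n$ locus, I would argue as follows. Any filtration $\lb\subset\E$ with $\deg\lb=n>3/2$ is necessarily the HN filtration. Indeed, any line subbundle of $\E$ other than $\lb$ maps nonzero to $\E/\lb$, so it has degree at most $3-n<n$. Hence $\lb$ is uniquely determined by $\E$.

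\textbf{Step 3: stratification properties.} The HN type is upper semicontinuous in families, by \cite{heinloth} or Shatz's theorem. Consequently the closure of the $n$-stratum is contained in the union of $\U(2,3,2)$-complement strata with $n'\geq n$, and each union $\U(2,3,2)\sqcup_{2\leq n\leq N}\ext^{(n,3)}_{(1,2)}(2)$ is open.

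\textbf{Main obstacle.} The hardest point is making the relative version over $\mathcal{M}_2$ rigorous: the immersion statement and semicontinuity have to hold for families of curves, not just for a fixed curve $C$. I would handle this by noting that the HN filtration is defined fibrewise and that the relative Quot scheme argument of \cite{heinloth} works verbatim over the base $\mathcal{M}_2$. Representability of the forgetful morphism, listed among the properties of filtration stacks above, supplies the remaining formal input.
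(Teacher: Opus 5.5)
Your proposal is correct and takes the same route as the paper, which simply invokes the Harder--Narasimhan stratification together with the immersion $\ext^{(n,3)}_{(1,2)}(2)\ri\ve(2,3,2)$ from \cite[Proposition 5.3]{heinloth}. Your additional points make explicit what the paper leaves implicit. These are that coprimality rules out strictly semistable bundles, that for $n\geq 2$ every filtration of type $(n,3)$ is automatically the HN filtration (so the whole filtration stack is the stratum), and that semicontinuity yields the stratification.
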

\begin{rem}\label{rem:strata}
For all $n\geq 2$, we have $$A^*(\ext^{(n,3)}_{(1,2)}(2))\simeq A^*(\J^n_2\times_{\mathcal{M}_2}\J^{3-n}_2)$$
by \cite[Lemma 3.3]{prada}. The relative dimension of $\theta_{n,3}$ further implies that $\ext^{(n,3)}_{(1,2)}$ has codimension $2n-2$ in $\ve(2,3,2)$. 
\end{rem}
\subsection{Hilbert series of defined strata}\label{sec:hilb-series}
\begin{definition}
The Hilbert series of an algebraic stack $\chi$ is the Hilbert series of graded group 
$A_*(\chi)$:
$$hilb_{ser}(\chi):=\Sigma_{i\geq 0} T^i\dim A_{\dim \chi - i}(\chi).$$ With slight abuse of notation, we shall denote the Hilbert series of a graded group $G$ by $hilb_{ser}(G)$.
\end{definition}
\begin{theorem}\label{thm:hilb-strata}
	For all $n\geq 2$, the Hilbert series of the strata are given by
	$$hilb_{ser}(\ext^{(n,3)}_{(1,2)}(2)) = \dfrac{1+3T+6T^2+3T^3+T^4}{(1-T)^2}$$
\end{theorem}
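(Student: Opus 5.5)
The plan is to compute the Chow groups of each stratum by reducing to the product of universal Jacobians, using results already established above. By Remark \ref{rem:strata}, for every $n\geq 2$ we have $A^*(\ext^{(n,3)}_{(1,2)}(2))\simeq A^*(\J^n_2\times_{\mathcal{M}_2}\J^{3-n}_2)$. This comes from the vector bundle stack $\theta_{n,3}$ of \cite[Corollary 3.2]{prada}, whose flat pullback is an isomorphism on Chow groups. All stacks involved are smooth, so the codimension grading $A^i$ agrees with the dimension grading $A_{\dim-i}$ used in the definition of $hilb_{ser}$. It therefore suffices to compute the graded dimensions of $A^*(\J^n_2\times_{\mathcal{M}_2}\J^{3-n}_2)$.

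Next, I apply Corollary \ref{cor:gerbe-jxj}, which gives $A^*(\J^n_2\times_{\mathcal{M}_2}\J^{3-n}_2)\simeq A^*(J^n_2\times_{M_2}J^{3-n}_2)[T_1,T_2]$ with $T_1,T_2$ of degree one. Adjoining two free degree-one generators multiplies the Hilbert series by $1/(1-T)^2$. By the corollary following Theorem \ref{jxj}, $A^*(J^n_2\times_{M_2}J^{3-n}_2)\simeq A^*(J^0_2\times_{M_2}J^0_2)$. The latter ring injects into $A^*(J_*(0)\times_{M_*}J_*(0))$ by \cite[Remark 2.2]{vakil}, and its image is $S_0^*$. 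So the remaining task is to read off the graded dimensions of $S_0^*$ from Theorem \ref{jxj}.

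The proof of Theorem \ref{jxj} exhibits a linear basis of the quotient ring: $1$ in degree $0$; $\pi_1,\pi_2,\mu$ in degree $1$; six monomials in degree $2$; $\mu^2\pi_1,\pi_1\pi_2^2,\pi_1^2\pi_2$ in degree $3$; and $\mu^2\pi_1\pi_2$ in degree $4$. These classes are linearly independent already in $H^*(J^0_C\times J^0_C)$, so they are independent in Chow. The Hilbert series of $S_0^*$ is therefore $1+3T+6T^2+3T^3+T^4$. Multiplying by $1/(1-T)^2$ gives the stated formula, independently of $n$.

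The main obstacle is bookkeeping rather than new geometry. One must make sure the gerbe decomposition of Corollary \ref{cor:gerbe-jxj} holds as an isomorphism of graded groups, with $T_1,T_2$ algebraically independent over the coarse ring, so that the tensor-product formula for Hilbert series applies. One must also check that the basis listed in Theorem \ref{jxj} is complete, i.e.\ that the stated relations really cut the ring down to exactly these 14 classes. If there were doubt about completeness, I would double-check it against the symmetric pattern $1,3,6,3,1$ expected from Poincaré duality on a fourfold.
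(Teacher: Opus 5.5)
Your proposal is correct and follows the paper's own argument. The paper's proof is just the citation of Corollary \ref{cor:gerbe-jxj} and Theorem \ref{jxj}. It leaves implicit the reduction via Remark \ref{rem:strata}, the independence of the degrees $(n,3-n)$, and the count $1,3,6,3,1$ read off from the explicit basis in the proof of Theorem \ref{jxj}; you spell out exactly these steps.
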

\begin{proof}
Follows from Corollary \ref{cor:gerbe-jxj} and Theorem \ref{jxj}.	
\end{proof}
\section{The conjecture in genus two}
We prove Conjecture \ref{conj:main} for odd degree bundles and use a Hecke correspondence to show it for even degree.
\subsection{Odd degree}
We shall prove the conjecture for degree three bundles. For simplicity, we shall set $d$ to be $3$ as explained in \cite[\S 4]{shubham-r2}.
\begin{definition}
For two series with integral coefficients $\alpha(T) = \Sigma_{j\geq 0}\alpha_jT^j,\beta(T) = \Sigma_{k\geq 0}\beta_kT^k$, we shall denote $\alpha(T)\preceq\beta(T)$ if $\alpha_i\leq \beta_i$ for all $i\geq 0$.
\end{definition}
\begin{theorem}\label{thm:main}
		For a curve $C$ of genus $2$, the following map is an isomorphism onto $H^*_{\ka}$ 
	$$A^*(\ve({2,3,2}))\ri H_{a}^*(Bun_{2,3}C)$$
	where $H^*_{\ka}$ is the cohomology subring generated by twisted-kappa classes and $H^*_a$ is the algebraic cohomology subring. 
\end{theorem}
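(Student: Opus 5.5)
The plan is to squeeze $A^*(\ve(2,3,2))$ between two Hilbert series. The upper bound comes from excision along the HN-stratification of Lemma \ref{lem:strata}. The lower bound comes from the restriction to $Bun_{2,3}C$ together with the known cohomology of $Bun_{2,3}C$ \cite{heinloth,atiyah-bott}.

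\textbf{Upper bound.} The stratification $\ve(2,3,2)=\U(2,3,2)\sqcup_{n\geq 2}\ext^{(n,3)}_{(1,2)}(2)$ has closed unions of strata. The stratum indexed by $n$ has codimension $2n-2$ by Remark \ref{rem:strata}. The excision sequences for Chow groups are right exact, which gives
$$hilb_{ser}(\ve(2,3,2))\preceq hilb_{ser}(\U(2,3,2))+\sum_{n\geq 2}T^{2n-2}\,hilb_{ser}(\ext^{(n,3)}_{(1,2)}(2)).$$
The first term is read off from Theorem \ref{sec:gerbe-chow}; it equals the Hilbert series of $A^*(U(2,3,2))$ from \cite[Theorem B]{shubham-r2} divided by $(1-T)$. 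Each stratum contributes $\frac{1+3T+6T^2+3T^3+T^4}{(1-T)^2}$ by Theorem \ref{thm:hilb-strata}. Summing the geometric series in $T^2$ gives an explicit rational function $P(T)$.

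\textbf{Lower bound.} On a fixed curve $C$, the cohomology of $Bun_{2,3}C$ is freely generated, in the sense of Atiyah–Bott, by the Künneth components of $c_1,c_2$ of the universal bundle. The Poincaré series of $Bun_{2,3}C$ is $\frac{(1+t)^{4}(1+t^3)^{4}}{(1-t^2)^2(1-t^4)}$. The algebraic part relevant for a very general $C$ is the subring $H^*_\ka$ generated by restrictions of twisted-kappa classes $\ka_{a_0,a_1,a_2}$, which are $\pi_*$ of monomials in $K_\pi,c_1,c_2$. Concretely, these are the $a_i$, $b_2$, $f_2$ together with the classes pairing with $\T$, i.e.\ the $Sp$-invariant parts of the $\psi$-components. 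Since $\mathrm{Sp}_4$-invariants of the odd classes are generated by the symplectic pairing $\sum \psi_i\psi_{i+2}$, I will compute $hilb_{ser}(H^*_\ka)$ by invariant theory and check that it equals $P(T)$. The restriction map $A^*(\ve(2,3,2))\ri H^*_a(Bun_{2,3}C)$ lands in $H^*_\ka$ once I show that $A^*(\ve(2,3,2))$ is generated by twisted-kappa classes. Generation follows inductively over the strata: generators of $\U(2,3,2)$ are twisted-kappa classes by Theorem \ref{sec:gerbe-chow}. On each stratum the classes $\pi_i^*\T_0,\mu^*\T_0,T_1,T_2$ are pushforwards of kappa-type classes of the line bundles in the filtration, and by Grothendieck–Riemann–Roch on the extension these are polynomials in twisted-kappa classes of $\E$ restricted to the stratum. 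The fundamental classes of the strata must also be expressed via kappa classes; one can use a Porteous-type formula for the locus where $\pi_*(\E\otimes L^{-1})$ jumps, or argue as in \cite{larson-24}.

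\textbf{Conclusion and main obstacle.} Surjectivity onto $H^*_\ka$ together with $\dim A^i\leq [T^i]P=\dim H^{2i}_\ka$ forces the map to be an isomorphism degree by degree. The main obstacle is the dimension match: showing that the upper bound $P(T)$ coincides exactly with $hilb_{ser}(H^*_\ka)$, i.e.\ that the excision sequences are actually short exact. I would first compute both series explicitly to low order as a consistency check. If the series agree, equality of dimensions already gives injectivity, so no separate vanishing of the boundary maps needs to be proven.
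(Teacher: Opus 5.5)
Your skeleton is the paper's proof. The upper bound comes from excision over the strata of Lemma \ref{lem:strata}, using Remark \ref{rem:strata}, Theorem \ref{thm:hilb-strata} and Theorem \ref{sec:gerbe-chow}; you then compare with $hilb_{ser}(H^*_\ka)$ and close by an exact equality of rational functions, which is a finite check rather than a low-order one. The paper computes $H^*_\ka$ from the free generators $a_1,a_2,f_2$ together with the \emph{three} $\mathrm{Sp}$-invariant pairings $p_1,p_2,p_3$ (of $b_1$ with $b_1$, $b_2$ with $b_2$, and $b_1$ with $b_2$), not a single pairing. These pairings span a $14$-dimensional algebra, giving $\frac{1+T+2T^2+2T^3+2T^4+2T^5+2T^6+T^7+T^8}{(1-T^2)(1-T)^2}$, which equals $r+s$.

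The step that would fail is your generation argument. On a stratum, a twisted-kappa class of $\E$ only sees $c_1(\E)=l_1+l_2$ and $c_2(\E)=l_1l_2$. Its nonconstant codimension-one instances are $\ka_{0,1,0},\ka_{-1,2,0},\ka_{-1,0,1}$, and $\ka_{1,0,0}$, which is pulled back from $\mathcal{M}_2$. So restrictions of twisted-kappa classes span at most a $4$-dimensional (in fact $3$-dimensional) subspace of $A^1(\ext^{(n,3)}_{(1,2)}(2))$. That group is $5$-dimensional by Theorem \ref{thm:hilb-strata}. Hence $\pi_1^*\T_0,\pi_2^*\T_0,\mu^*\T_0,T_1,T_2$ are not all polynomials in restricted twisted-kappa classes. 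The induction ``restrictions generate each stratum, then multiply by the fundamental class of the stratum'' therefore cannot show that pushforwards from the strata lie in the twisted-kappa subring. The GRR/Porteous detour does not repair this.

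The step is also unnecessary, and this is how the paper avoids it. Let $R^*$ be the subring generated by twisted-kappa classes. Then $R^*\subseteq A^*(\ve(2,3,2))$, and $R^*\to H^*_\ka$ is surjective, since twisted-kappa classes restrict by base change to the twisted-kappa classes that generate $H^*_\ka$. So $\dim H^{2i}_\ka\le \dim R^i\le \dim A^i\le [T^i]P$. Once $P=hilb_{ser}(H^*_\ka)$, every inequality is an equality. That yields $A^*=R^*$ and injectivity of $R^*\to H^*_\ka$ at once. Generation is an output of the count, not an input.

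In your own formulation, you only need the image of $A^*$ to \emph{contain} $H^*_\ka$, not to be contained in it: the dimension bound then forces the image to equal $H^*_\ka$ and the map to be injective. Delete the generation paragraph and your argument coincides with the paper's.
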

\begin{proof}
	We start by observing that 
$$A^*(\ve(2,3,2))\supset R^*(\ve(2,3,2))\twoheadrightarrow H^*_\ka(Bun_{2,3}C).$$
Therefore, we have $$hilb_{ser} (H^*_\ka(Bun_{2,3}C))\preceq hilb_{ser}(A^*(\ve(2,3,2))).$$
We shall show that these Hilbert series are in fact equal. The theorem would then follow as an immediate consequence of the following:
\begin{itemize}
	\item $A^*(\ve(2,3,2)) = R^*(\ve(2,3,2))$.
	\item The surjection $R^*(\ve(2,3,2))\ri H^*_\ka(Bun_{2,3}C)$ is an isomorphism.
\end{itemize}
In Figure \ref{cohom:code}, we use generators of $H^*_\ka(Bun_{2,3}C)$ from \cite[Theorem 4.7]{heinloth}. We compute the Hilbert series of the associated quotient ring (denoted by \textit{coh}) in Figure \ref{conj:code}. Using Figure \ref{cohom:code}, a linear basis of the subring of $H^*_\ka(Bun_{2,3}C)$ generated by $\{p_1,p_2,p_3\}$ is 
$$\{1,p_1,p_1^2, p_3, p_1p_3 ,p_2 ,p_1p_2 ,p_3^2 ,p_1p_3^2, p_2p_3,p_2^2, p_3^3,p_2p_3^2,p_3^4 \}$$ 
which matches with the numerator of \textit{coh}. Hence, we have that the claimed relations generate the ideal of relations in $H^*_\ka(Bun_{2,3}C)$. Therefore,
$$hilb_{ser}(H^*_\ka(Bun_{2,3}C)) = \dfrac{1+T+2T^2+2T^3+2T^4+2T^5+2T^6+T^7+T^8}{(1-T^2)(1-T)^2}.$$
On the other hand, we have the following using Lemma \ref{lem:strata}, Remark \ref{rem:strata} and excision on $\ve(2,3,2)$:
\begin{gather*}hilb_{ser}(\ve(2,3,2)) \preceq hilb_{ser}(\U(2,3,2))+\Sigma_{n\geq 2}T^{2n-2}hilb_{ser}(\ext_{(1,2)}^{(n,3)}(2)).\end{gather*}
Applying computations of Hilbert series from Theorem \ref{thm:hilb-strata}, we have the expression for \textit{r} in Figure \ref{conj:code}.
We denote $hilb_{ser}(\U(2,3,2))$ by \textit{s} and compute in Figure \ref{conj:code}. The equality between \textit{coh} and \textit{r}+\textit{s}, verified by Figure \ref{conj:code}, proves the theorem.
\end{proof}
\subsection{The Hecke correspondence}\label{sec:hecke}
We construct the well-known Hecke correspondence between $\ve_w^\h(2,3,2),\ve_w^\h(2,2,2)$ to prove Conjecture \ref{conj:main} for even degree bundles.
\begin{definition}\label{def:pu}
	Let $w:\mathcal{C}_{2,w}\ri \mathcal{M}_{2,w}$ be the universal Weierstrass section. For any integer $d$, let $\ve_w(2,d,2)$ be pullbacks of $\ve(2,d,2)$ over $\mathcal{M}_{2,w}\ri\mathcal{M}_2$, 
	let $\e_d$ be the Poincar\'e bundles on the stacks $\mathcal{C}_{2,w}\times_{\mathcal{H}_{2,w}} \ve_w(2,d,2)$, and set $\p_{d}:=\p_{\ve_w(2,d,2)}(\e_d|^\vee_{w\times_{\mathcal{H}_{2,w}} \ve_w^\h(2,d,2)})$. 
Additionally, we denote the associated map $\p_{d}\ri \ve_w(2,d,2)$ by $\phi_d$.
\end{definition}
\begin{theorem}\label{thm:cu}
The Hecke correspondence along the Weierstrass section induces an isomorphism $c_w:\p_{3}\xrightarrow{\sim} \p_{2}$.
\end{theorem}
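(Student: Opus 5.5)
The isomorphism $c_w$ will be built as an explicit elementary modification (Hecke transform) along the Weierstrass section, and we will then write down its inverse. Work over a test scheme $S$ with a family $\C_S\ri S$ of genus two curves, Weierstrass section $w_S$, and a rank two bundle $E$ of degree $3$ on $\C_S$. An $S$-point of $\p_3$ over $(\C_S,E)$ is, in the subspace convention applied to $\e_3|^\vee_w$, a line subbundle $\ell\subset E^\vee|_{w_S}$. Equivalently, it is a surjection $q: E|_{w_S}\twoheadrightarrow Q$ onto a line bundle $Q$ on $S$, with $\ker q=\ell^{\perp}$.

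From such a point we form the elementary modification
$$0\ri E'\ri E\xrightarrow{q} w_{S*}Q\ri 0.$$
Since $w_S$ is a relative Cartier divisor, $E'$ is locally free of rank two. Its degree on each fibre is $2$, and it is flat over $S$. The kernel $K=\ker(E'|_{w_S}\ri E|_{w_S})$ is a line subbundle of $E'|_{w_S}$; concretely $K\simeq Q\otimes\mo(-w_S)|_{w_S}$. Dually, it gives a line in $E'^\vee|_{w_S}$, and hence a point of $\p_2$ over $(\C_S,E')$. This construction commutes with base change, so it defines a morphism of stacks $c_w:\p_3\ri\p_2$ over $\m_{2,w}$.

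For the inverse we must reverse the construction from the degree two side. Given $(E',K\subset E'|_{w_S})$, we set
$$E := \ker\big(E'(w_S)\ri w_{S*}\big((E'|_{w_S}/K)\otimes \mo(w_S)|_{w_S}\big)\big),$$
which has degree $2+2-1=3$. The line on the degree three side is recovered from the image of $E'$ in $E|_{w_S}$. The key local check is on a neighbourhood of $w_S$, where $E\simeq\mo\oplus\mo$ with local coordinate $t$:
\begin{itemize}
\item modifying $\mo\oplus\mo$ along $q=\mathrm{pr}_2$ gives $\mo\oplus t\mo$;
\item the kernel line of $E'|_{w}\ri E|_w$ is spanned by $(0,t)$;
\item twisting by $\mo(w)$ and modifying along this line returns $\mo\oplus\mo$, together with the original quotient.
\end{itemize}
The two composites are therefore canonically isomorphic to the identities, and the isomorphisms are compatible with automorphisms of $E$. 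Scalar automorphisms act trivially on the flag data, so both functors are equivalences of groupoids.

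I expect the main obstacle to be matching the conventions precisely: the definition of $\p_d$ uses the dual $\e_d|^\vee_w$ with the subspace convention, and the Poincar\'e bundles are only defined up to the universal $\mathbb{G}_m$-gerbe twist. One must check that $c_w$ is well defined on the stack, which requires the construction to be independent of rescaling $E$ by a line bundle pulled back from $S$. It also requires the twist by $\mo(\pm w_S)$ to be canonical, which holds because $w$ is a universal section over $\m_{2,w}$. To settle this, I will state the correspondence entirely in terms of kernels of surjections onto $w_{S*}$ of a line bundle. That formulation is manifestly functorial and twist-invariant, and the lines in the duals are identified only at the end.
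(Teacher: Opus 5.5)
Your proposal is correct and takes essentially the same route as the paper. In both, $\e_2$ is the kernel of the tautological quotient $(1\times\phi_3)^*\e_3\twoheadrightarrow \mo_{w\times\p_3}(1)$, and the inverse is the Hecke modification along the tautological quotient on $\p_2$ followed by twisting by $\mo(w)$; your $\ker\big(E'(w)\ri w_*((E'|_w/K)\otimes\mo(w)|_w)\big)$ is exactly that construction. You are more explicit than the paper on two points it leaves implicit: you say which rank-one quotient $E'|_w/K\simeq\ker q$ defines the $\p_2$-point, and you check locally that both composites are the identity.
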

\begin{proof}
	The Poincar\'e bundle $\e_3\ri \mathcal{C}_{2,w}\times_{\mathcal{M}_{2,w}} \ve_w^\h(2,3,2)$ pulls back via $(1\times \phi_3)$ to the following composition of surjections:
	$$(1\times \phi_3)^*\e_3\twoheadrightarrow \phi_3^*(\e_3|_{w\times{\mathcal{M}_{2,w}} \ve_w^\h(2,3,2)})\twoheadrightarrow \mo_{w\times{\mathcal{M}_{2,w}}\p_3}(1).$$
Let $\e_2$ be the kernel of the composed surjection $(1\times \phi_3)^*\e_3\twoheadrightarrow\mo_{w\times_{\mathcal{M}_{2,w}}\p_3}(1)$. We show that $\e_2\ri \mathcal{C}_{2,w}\times_{\mathcal{M}_{2,w}} \p_3$ is a family of vector bundles with the claimed properties.\\
The sheaves $(1\times \phi_3)^*\e_3,\mo_{w\times_{\mathcal{M}_{2,w}} \p_3}(1)$ are flat over $\p_3$. Therefore, 
$$\e_2|_{C\times p}\simeq ker(\e_3|_{C_p\times \phi_3(p)}\twoheadrightarrow \mo_{w_p}) \forall p\in |\p_3|.$$
It follows that $deg(\e_2|_{C_p\times \phi_3(p)})=2 $ for all $p\in \p_3$. Hence we a have a morphism $c_w:\p_3\ri \p_2$.
To see that $c_w$ is an isomorphism, we construct the inverse map. The Hecke modification of $\e_2$ along the section $w$ produces a family of degree $1$ bundles on $\mC_{2,w}\times_{\m_{2,w}}\p_2$, twisting by $\mo(w)$ defines the inverse map $\p_2\ri \p_3$.
\end{proof}
\begin{corollary}\label{cor:hecke}
We have $hilb_{ser}(\ve_w(2,3,2)) = hilb_{ser}(\ve_w(2,2,2))$.
\end{corollary}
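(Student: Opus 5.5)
The plan is to compare both stacks with the common space $\p_3\simeq\p_2$ of Theorem \ref{thm:cu}, using the projective bundle formula. By Definition \ref{def:pu}, $\phi_d:\p_d\ri\ve_w(2,d,2)$ is the projectivization of the rank two vector bundle $\e_d|^\vee_{w\times\ve_w(2,d,2)}$ on $\ve_w(2,d,2)$. So $\phi_d$ is a $\p^1$-bundle, and the projective bundle theorem for Artin stacks (Kresch's Chow groups, rational coefficients) gives an isomorphism of graded groups
$$A_*(\p_d)\simeq A_*(\ve_w(2,d,2))\oplus A_{*}(\ve_w(2,d,2))\cdot \zeta_d,$$
where $\zeta_d=c_1(\mo_{\p_d}(1))$. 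Since $\dim\p_d=\dim\ve_w(2,d,2)+1$, after reindexing by codimension this reads
$$hilb_{ser}(\p_d)=(1+T)\,hilb_{ser}(\ve_w(2,d,2)).$$

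Theorem \ref{thm:cu} gives an isomorphism of stacks $c_w:\p_3\xrightarrow{\sim}\p_2$. Hence $hilb_{ser}(\p_3)=hilb_{ser}(\p_2)$, which yields
$$(1+T)\,hilb_{ser}(\ve_w(2,3,2))=(1+T)\,hilb_{ser}(\ve_w(2,2,2)).$$
We then cancel $1+T$ in $\Z[[T]]$. This is legitimate because $1+T$ is a unit there, with inverse $\sum(-1)^iT^i$. Both series are well-defined formal power series: every graded piece is finite dimensional, by the excision bound of Theorem \ref{thm:main} and its Hilbert series computations together with a pullback to $\m_{2,w}$. Cancelling gives the claim.

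The main obstacle is the bookkeeping needed to apply the projective bundle formula on these non-quasi-compact Artin stacks. Both $\ve_w(2,d,2)$ and $\p_d$ are unions of infinitely many HN-strata, so I would work stratum by stratum. First restrict to the quasi-compact open substacks given by bounding the HN-type. Apply the projective bundle formula there, where it holds for Kresch's Chow groups of stacks with affine stabilizers. Then pass to the limit, using that in each fixed codimension the Chow group stabilizes once enough strata are included, since the codimension $2n-2$ of the strata grows (Remark \ref{rem:strata}).

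A further subtlety is that $c_w$ need not match the bounded pieces of $\p_3$ and $\p_2$ directly, because Hecke modification shifts HN types. This does not matter: the identity $hilb_{ser}(\p_3)=hilb_{ser}(\p_2)$ only uses the global isomorphism $c_w$ and the codimension-wise stabilized Chow groups, not any compatibility with the stratifications.
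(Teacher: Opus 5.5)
Your proof is correct and is the paper's argument. The projective bundle formula gives $hilb_{ser}(\p_d)=(1+T)\,hilb_{ser}(\ve_w(2,d,2))$, the Hecke isomorphism $c_w$ of Theorem \ref{thm:cu} identifies $hilb_{ser}(\p_3)$ with $hilb_{ser}(\p_2)$, and one cancels $1+T$.

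Your additional remarks fill in details the paper leaves implicit: that $1+T$ is a unit in $\Z[[T]]$, and that one works on quasi-compact opens of bounded HN-type and passes to the limit. One citation is loose: Theorem \ref{thm:main} concerns $\ve(2,3,2)$, not $\ve_w(2,3,2)$, so it does not directly give finite-dimensionality. Finiteness on the degree-two side does follow from the degree-three side, via $c_w$ and the injectivity of $\phi_2^*$.
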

\begin{proof}
Using the projective bundle formula, we have 
\begin{gather*}hilb_{ser}(\ve_w(2,3,2))(1+T) = hilb_{ser}(\p_3)=hilb_{ser}(\p_2) = (1+T)hilb_{ser}(\ve(2,2,2))\\
	\implies hilb_{ser}(\ve_w(2,3,2)) = hilb_{ser}(\ve_w(2,2,2)).\end{gather*}
\end{proof}
\begin{corollary}\label{cor:conj}
Conjecture \ref{conj:main} holds for $A^*(\ve(2,d,2))$ for $d$ even.
\end{corollary}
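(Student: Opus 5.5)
The plan is to transport the odd-degree statement (Theorem \ref{thm:main}) to even degree through the Hecke correspondence, using Hilbert series bookkeeping as in that proof. By tensoring with powers of $\mo(w)$ along the Weierstrass section, $\ve_w(2,d,2)\simeq\ve_w(2,d+2,2)$, so after base change to $\m_{2,w}$ it is enough to treat $d=2$. On the cohomological side, tensoring by a line bundle on $C$ identifies $Bun_{2,d}C$ with $Bun_{2,d+2}C$ and carries twisted-kappa classes to polynomials in twisted-kappa classes. Hence we may fix $d=2$, and $Bun_{2,2}C\simeq Bun_{2,0}C$ as needed.

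\textbf{Step 1 (reduce to the Weierstrass cover).} The map $\ve_w(2,d,2)\ri\ve(2,d,2)$ is finite and surjective, being pulled back from $\m_{2,w}\ri\m_2$. Therefore $A^*(\ve(2,d,2))\hr A^*(\ve_w(2,d,2))$ is injective with rational coefficients, and it is split by pushforward divided by the degree. The restriction to $Bun_{2,d}C$ factors through $\ve_w$ once a Weierstrass point of $C$ is chosen. Twisted-kappa classes are pulled back from $\ve(2,d,2)$, so it suffices to control generators and Hilbert series on $\ve(2,d,2)$ itself. For degree three, I first check that the proof of Theorem \ref{thm:main} equally yields $hilb_{ser}(\ve_w(2,3,2))=hilb_{ser}(\ve(2,3,2))=hilb_{ser}(H^*_\ka(Bun_{2,3}C))$. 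This is the step I expect to be the main obstacle. The needed input is that the $\m_{2,w}$-base change does not enlarge the Chow groups, which should follow from Corollary \ref{cor:cj} together with the analogous statement for $J\times J$ (the first corollary after Theorem \ref{jxj}) and $A^*(\m_{2,w})=\Q$ on each HN-stratum and on $\U(2,3,2)$. If this fails, I would instead run the whole argument with the $\mathfrak S_6$-invariants of the $M_*$-covers.

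\textbf{Step 2 (transfer Hilbert series).} Corollary \ref{cor:hecke} gives $hilb_{ser}(\ve_w(2,2,2))=hilb_{ser}(\ve_w(2,3,2))$. Combined with Step 1, this gives
\[
hilb_{ser}(A^*(\ve(2,2,2)))\preceq hilb_{ser}(\ve_w(2,2,2))=hilb_{ser}(H^*_\ka(Bun_{2,3}C)).
\]
The Hecke correspondence of Theorem \ref{thm:cu} restricts fiberwise over $[C]$ to an isomorphism of $\p^1$-bundles over $Bun_{2,3}C$ and $Bun_{2,2}C$. Under $c_w^*$ and the projective bundle formula, twisted-kappa classes of $\e_2$ are expressed through twisted-kappa classes of $\e_3$, the hyperplane class, and the Chern classes of $\e_3|_w$, and conversely. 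Restricting to a curve $C$, $c_1$ and $c_2$ of $\e|_w$ are themselves twisted-kappa-type classes in $H^*(Bun)$ by \cite[Theorem 4.7]{heinloth}. It follows that $hilb_{ser}(H^*_\ka(Bun_{2,2}C))=hilb_{ser}(H^*_\ka(Bun_{2,3}C))$.

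\textbf{Step 3 (conclude).} As in the proof of Theorem \ref{thm:main}, $R^*(\ve(2,2,2))\twoheadrightarrow H^*_\ka(Bun_{2,2}C)$ gives the reverse inequality
\[
hilb_{ser}(H^*_\ka(Bun_{2,2}C))\preceq hilb_{ser}(R^*)\preceq hilb_{ser}(A^*(\ve(2,2,2))).
\]
All these series therefore coincide. Equality of dimensions in each degree forces $A^*=R^*$ and forces the restriction map to be an isomorphism onto $H^*_\ka$, which proves Conjecture \ref{conj:main} for even $d$.
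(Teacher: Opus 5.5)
Your proposal has the same skeleton as the paper's proof. Corollary \ref{cor:hecke} moves the Chow-side Hilbert series from degree $3$ to degree $2$, the Hilbert series of $H^*_\ka(Bun_{2,2}C)$ and $H^*_\ka(Bun_{2,3}C)$ are identified, and the squeeze of Theorem \ref{thm:main} finishes. Two sub-steps are justified differently.

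\textbf{Cohomology side.} The paper simply cites \cite[Theorem 4.7]{heinloth}. For every $d$, $H^*(Bun_{2,d}C)$ is freely generated by $a_1,a_2,f_2,b_i^j$, and $H^*_\ka$ is generated by $a_1,a_2,f_2,p_1,p_2,p_3$. This gives an abstract isomorphism $H^*_\ka(Bun_{2,3}C)\simeq H^*_\ka(Bun_{2,2}C)$. Your Leray--Hirsch argument on the fibrewise Hecke isomorphism gives only equality of Hilbert series, but that is all the squeeze uses. To make it complete, write $R_d=H^*_\ka(Bun_{2,d}C)$ and $h_d$ for the hyperplane class on $\p_d|_C$. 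With the paper's convention, $c_w^*h_2=a_1-h_3$, because the quotient of $\e_2|_w$ defining the inverse modification is the kernel line of $\phi_3^*(\e_3|_w)\twoheadrightarrow\mo(1)$. Hence $c_w^*$ carries $R_2\oplus R_2h_2$ onto $R_3\oplus R_3h_3$, not merely $R_2$ into $R_3[h_3]$.

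\textbf{Chow side.} Here you are more careful than the paper. The paper applies Corollary \ref{cor:hecke}, which is a statement about $\ve_w$, directly to $\ve(2,2,2)$ and $\ve(2,3,2)$. You use the split injection $A^*(\ve(2,d,2))\hookrightarrow A^*(\ve_w(2,d,2))$ together with the $\mo(w)$-twist. This correctly reduces every even $d$ to the single bound $hilb_{ser}(\ve_w(2,3,2))\preceq$ \textit{coh}.

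That bound is exactly where your sketch is not closed by what you cite. Excision on $\ve_w(2,3,2)$ needs the Weierstrass versions of both the HN strata and $\U(2,3,2)$.
For the strata, the corollary following Theorem \ref{jxj}, together with the $w$-analogues of Corollary \ref{cor:gerbe-jxj} and Remark \ref{rem:strata}, gives the same series as before.
For the open stratum, nothing in the paper applies:
\begin{itemize}
\item Corollary \ref{cor:cj} concerns $C_2\times_{M_2}J^d_2$, not the stable locus.
\item Theorem \ref{sec:gerbe-chow} computes only $A^*(\U(2,3,2))$.
\item The finite-cover injection bounds $A^*(\U(2,3,2))$ by $A^*(\U_w(2,3,2))$, which is the wrong direction.
\end{itemize}
Your $\mathfrak S_6$ fallback does not help either. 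The Hecke correspondence along a Weierstrass section is equivariant only for that section's stabiliser, so it again lives on the Weierstrass-marked stacks.

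What is missing is an argument that base change to $\m_{2,w}$ creates no new cycles on the stable locus, for instance by rerunning the computation of \cite{shubham-r2} over $\m_{2,w}$. The paper's proof relies on this same comparison implicitly. So your proposal is no less complete than the paper's argument; it makes visible the one input that both leave unproved.
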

\begin{proof}
As shown in \cite[Theorem 4.7]{heinloth}, we have a non-canonical isomorphism
$$H^*_\ka(Bun_{2,3}C)\simeq H^*_\ka(Bun_{2,2}C).$$
Hence, we have that $hilb_{ser}(H^*_\ka(Bun_{2,2}C))=hilb_{ser}(\ve(2,2,2))$ by Corollary \ref{cor:hecke}. Using similar arguments as in Theorem \ref{thm:main}, we have the claimed isomorphism.
\end{proof}
\section{Relations and higher $\ka$-classes in terms of generators}
\begin{theorem}\label{thm:taut-high}
	For any integer $d$, we have the following relations between higher twisted-kappa classes in $A^*(\ve({2,d,2}))$:
  \begin{gather*}
    2^{n-1}\ka_{-1,0,n} =\ka_{0,0,1}^{n-2}(n(n-1)\ka_{-1,0,2}-(n^2-2n)\ka_{-1,0,1}\ka_{0,0,1}) \forall n\geq 2,\\
    \ka_{-1,m,n} = \dfrac{1}{2^n}\ka_{0,0,1}^n\ka_{-1,m,0}+\dfrac{1}{2^m}\ka_{0,1,0}^m\ka_{-1,0,n}+\dfrac{nm}{2^{n+m-2}}\ka_{0,1,0}^{n-1}\ka_{0,0,1}^{m-1}(\ka_{-1,1,1}-\dfrac{1}{2}(\ka_{-1,0,1}\ka_{0,1,0}+d\ka_{0,0,1}))\forall m,n\geq 1,\\
   2^{m+n-1} \ka_{0,m,n} = \ka_{0,1,0}^m\ka_{0,0,1}^n\forall m,n\geq 0,\quad 
    \ka_{i,m,n} = 0\forall i\geq 1.
  \end{gather*}
\end{theorem}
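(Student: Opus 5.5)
Since $\ka_{a_0,a_1,a_2}=\pi_*(K_\pi^{a_0+1}c_1(\E)^{a_1}c_2(\E)^{a_2})$, the relations only involve how $K_\pi$, $c_1(\E)$, $c_2(\E)$ behave on the universal curve $\mC\times_{\m_2}\ve(2,d,2)$. By Theorem \ref{thm:main} and Corollary \ref{cor:conj}, the map $A^*(\ve(2,d,2))\ri H^*_{\ka}(Bun_{2,d}C)$ is injective on the tautological subring, and that subring is all of $A^*$. So the plan is to verify every identity after restricting to a fixed genus two curve $C$, in $H^*(C\times Bun_{2,d}C)$ pushed forward to $H^*(Bun_{2,d}C)$. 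Before doing so, one reduces to $\ve_w(2,d,2)$ via the finite surjective pullback from $\m_{2,w}$. This is injective on rational Chow groups by \cite[Remark 2.2]{vakil}, and it gives a Weierstrass section $w$ to work with.

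\textbf{Step 1: the Künneth decomposition.} On $C\times Bun_{2,d}C$, Atiyah--Bott/Heinloth \cite[Theorem 4.7]{heinloth} give
$$c_1(\E)=d[pt]+a_1+\textstyle\sum_j b_1^j e_j,\qquad c_2(\E)=[pt]\,f_2+a_2+\textstyle\sum_j b_2^j e_j.$$
Here $e_j$ is a symplectic basis of $H^1(C)$ and $a_i,f_2,b_i^j$ are classes on $Bun$. We have $K_\pi=2[pt]$ with $[pt]^2=0$ and $e_je_k$ a multiple of $[pt]$.

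\textbf{Step 2: the cases $i\geq 1$ and $\ka_{0,m,n}$.}
\begin{itemize}
\item For $a_0\geq 1$, $K_\pi^{a_0+1}=0$ on a curve fiber, so $\ka_{i,m,n}=0$ for $i\geq 1$.
\item For $a_0=0$, $\ka_{0,m,n}=2\pi_*([pt]c_1^mc_2^n)=2a_1^ma_2^n$. In particular $\ka_{0,1,0}=2a_1$ and $\ka_{0,0,1}=2a_2$, which gives $2^{m+n-1}\ka_{0,m,n}=\ka_{0,1,0}^m\ka_{0,0,1}^n$.
\end{itemize}
To make this rigorous in Chow on $\ve_w$ rather than after restriction, one can note that $K_\pi$ is a multiple of $[w]$ in $A^1(\mC_{2,w})$, modulo pullbacks from $\m_{2,w}$, which are zero rationally. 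Then $w^*c_i(\E)$ computes $a_i$ directly.

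\textbf{Step 3: the $\ka_{-1,m,n}$ classes.} Here $\ka_{-1,m,n}=\pi_*(c_1^mc_2^n)$. Expanding the product, only terms with exactly one $[pt]$ or one pair $e_je_k$ survive the pushforward. This yields
$$\ka_{-1,m,n}=m\,d\,a_1^{m-1}a_2^n+n\,a_1^ma_2^{n-1}f_2+\binom{m}{2}a_1^{m-2}a_2^n\beta_{11}+mn\,a_1^{m-1}a_2^{n-1}\beta_{12}+\binom{n}{2}a_1^ma_2^{n-2}\beta_{22},$$
where $\beta_{kl}=\sum_j b_k^jb_l^{j'}$ is the symplectic pairing term. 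The low-degree classes $\ka_{-1,2,0},\ka_{-1,0,1},\ka_{-1,0,2},\ka_{-1,1,1}$ then solve for $\beta_{11},f_2,\beta_{22},\beta_{12}$ in terms of the generators. Substituting back, and replacing $a_1,a_2$ by $\ka_{0,1,0}/2$ and $\ka_{0,0,1}/2$, gives both displayed formulas:
\begin{itemize}
\item the $\ka_{-1,0,n}$ formula comes from the $m=0$ case, using $\ka_{-1,0,1}$ and $\ka_{-1,0,2}$;
\item the mixed formula groups the $m$-only, $n$-only and cross terms, with the cross term equal to $\ka_{-1,1,1}-\tfrac12(\ka_{-1,0,1}\ka_{0,1,0}+d\ka_{0,0,1})$.
\end{itemize}

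\textbf{Main obstacle.} The key subtlety is justifying that identities verified in cohomology of a single fiber $Bun_{2,d}C$ hold in $A^*(\ve(2,d,2))$. This needs injectivity of restriction on all tautological classes, including the higher $\ka$'s, and not only on the generators. It holds because Theorem \ref{thm:main} gives $A^*=R^*$ mapping isomorphically onto $H^*_\ka$. A second point to check carefully is the normalization of $\E$: twisting by line bundles pulled back from the base changes the classes. The stated formulas must be invariant under this, or one must fix the normalization along $w$ consistently.
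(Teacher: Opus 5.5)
Your proposal is correct and follows the paper's proof essentially step for step. The $\ka_{0,m,n}$ relation comes from the Weierstrass section on the finite cover $\ve_w(2,d,2)$. The $\ka_{-1,m,n}$ relations come from the Künneth decomposition of $c_i(\E)$ on $C\times Bun_{2,d}C$, using the injectivity supplied by Conjecture \ref{conj:main} (Theorem \ref{thm:main}, Corollary \ref{cor:conj}), and solving for $f_2$ and the symplectic pairing terms via $\ka_{-1,0,1},\ka_{-1,2,0},\ka_{-1,0,2},\ka_{-1,1,1}$.

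Two small remarks. First, your expansion yields the cross term $\frac{mn}{2^{m+n-2}}\ka_{0,1,0}^{m-1}\ka_{0,0,1}^{n-1}(\cdots)$. This is the homogeneous, correct form of the displayed identity, whose exponents are swapped by a typo in the statement. Second, your treatment of $\ka_{i,m,n}=0$ for $i\geq 1$ via $K_\pi^2=0$ rationally covers a case that the paper's proof does not write out.
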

\begin{proof}
We begin by establishing the third relation. Pulling back to the finite cover $\ve_w(2,d,2)\ri \ve(2,d,2)$, we have
\begin{gather*}\dfrac{1}{2}\ka_{0,m,n} = p_{2*}i_{w*}(c_1(\e_d|_{w_\V})^mc_2(\e_d|_{w_\V})^n) = c_1(\e_d|_{w_\V})^mc_2(\e_d|_{w_\V})^n = \\
\left(\dfrac{1}{2}p_{2*}(K_{p_2}c_1(\e_d))\right)^m\left(\dfrac{1}{2}p_{2*}(K_{p_2}c_2(\e_d))\right)^n= \dfrac{1}{2^{m+n}}\ka_{0,1,0}^m\ka_{0,0,1}^n\forall m,n\geq 0.\end{gather*}
where $i_w: \ve_w(2,d,2)\hr \mC_{2,w}\times_{\mathcal{M}_{2,w}}\ve_w(2,d,2)$ is the pullback of the Weierstrass section, $p_2:\mC_{2,w}\times_{\mathcal{M}_{2,w}}\ve_w(2,d,2)\ri\ve_w(2,d,2)$ is the projection map, and $w_\V$ is the image of $i_w$.

We now prove the remaining relations using Conjecture \ref{conj:main} for $\ve(2,d,2)$. The $\ka$-classes denote the corresponding elements of $H^*(Bun_{2,d}C)$ from hereon.

Let $\e_{d,C}\ri C\times Bun_{r,d}C$ be a restriction of $\e_d$. Following the noration in \cite[Theorem 4.7]{heinloth}, for $1\leq i\leq 2$, we have
$$c_i(\e_{d,C}) = 1\otimes a_i+\Sigma_{j=1}^4\gamma_j\otimes b_i^j+[pt]\otimes f_i$$
with $(\gamma_j)_{1\leq j\leq 4}\subset H^1(C,\Q)$ a symplectic basis.
It is clear that $a_i,f_i\in H^*_\ka(Bun_{r,d}C)$. More precisely, we have 
$$a_1 = \dfrac{1}{2}\ka_{0,1,0}, a_2 = \dfrac{1}{2}\ka_{0,0,1}, f_1 = d, f_2 = \ka_{-1,0,1}.$$
On the other hand, we have 
\begin{gather*}
	b_1^1b_1^3+b_1^2b_1^4 = \dfrac{d\ka_{0,1,0}-\ka_{-1,2,0}}{2},
	b_2^1b_2^3+b_2^2b_2^4 = \dfrac{\ka_{0,0,1}\ka_{-1,0,1}-\ka_{-1,0,2}}{2} ,\\
	b_1^1b_2^3+b_1^2b_2^4-b_1^3b_2^1-b_1^4b_2^2 = \dfrac{\ka_{0,1,0}\ka_{-1,0,1}+d\ka_{0,0,1}-2\ka_{-1,1,1}}{2}.
\end{gather*}
It follows that 
\begin{gather*}\ka_{-1,0,n} = na_2^{n-1}f_2-2{n\choose 2}a_2^{n-2}(b_2^1b_2^3+b_2^2b_2^4) = \dfrac{n}{2^{n-1}}\ka_{0,0,1}^{n-1}\ka_{-1,0,1}-\dfrac{n(n-1)}{2^{n-1}}\ka_{0,0,1}^{n-2}(\ka_{0,0,1}\ka_{-1,0,1}-\ka_{-1,0,2})\\
\implies \ka_{-1,0,n} = \dfrac{\ka_{0,0,1}^{n-2}}{2^{n-1}}((n^2-n)\ka_{-1,0,2}-(n^2-2n)\ka_{0,0,1}\ka_{-1,0,1})\forall n\geq 2,\\
\ka_{-1,m,n} = a_1^m\ka_{-1,0,n}+a_2^n\ka_{-1,m,0} - mna_1^{m-1}a_2^{n-1}(b_1^1b_2^3+b_1^2b_2^4 -b_1^3b_2^1 - b_1^4b_2^2)\implies \\
 \ka_{-1,m,n}=\dfrac{1}{2^n}\ka_{0,0,1}^n\ka_{-1,m,0}+\dfrac{1}{2^m}\ka_{0,1,0}^m\ka_{-1,0,n}+\dfrac{nm}{2^{n+m-2}}\ka_{0,1,0}^{n-1}\ka_{0,0,1}^{m-1}(\ka_{-1,1,1}-\dfrac{1}{2}(\ka_{-1,0,1}\ka_{0,1,0}+d\ka_{0,0,1}))\forall m,n\geq 1.
\end{gather*}
\end{proof}
\begin{theorem}
  Conjecture \ref{conj:main} gives the following presentation for $A^*(\ve(2,d,2))$:
  \begin{gather*}A^*(\ve({2,d,2}))\simeq \Q[\ka_{0,1,0},\ka_{0,0,1},\ka_{-1,0,1},\ka_{-1,2,0},\ka_{-1,0,2},\ka_{-1,1,1}]/(x_1^3,x_1^2x_3,x_1^2x_2+x_1x_3^2,6x_1x_2x_3+x_3^3,x_1x_3^3,\\
	x_1x_2^2+x_2x_3^2,x_2^2x_3,x_2x_3^3,x_2^3,x_3^5)\end{gather*}
  where $x_1 := \dfrac{d\ka_{0,1,0}-\ka_{-1,2,0}}{2},x_2 := \dfrac{\ka_{0,0,1}\ka_{-1,0,1}-\ka_{-1,0,2}}{2},x_3 := \dfrac{\ka_{0,1,0}\ka_{-1,0,1}+d\ka_{0,0,1}-2\ka_{-1,1,1}}{2}$.
\end{theorem}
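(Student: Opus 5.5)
By Theorem \ref{thm:main} and Corollary \ref{cor:conj}, the composite $A^*(\ve(2,d,2))\ri H^*_a(Bun_{2,d}C)$ is injective with image exactly $H^*_\ka(Bun_{2,d}C)$. So the plan is to compute a presentation of $H^*_\ka(Bun_{2,d}C)$ in terms of twisted-kappa classes and transport it back. I would work in the Atiyah--Bott/Heinloth description \cite[Theorem 4.7]{heinloth}. There $H^*(Bun_{2,d}C)$ is freely generated by the K\"unneth components $a_i,b_i^j,f_i$ of $c_i(\e_{d,C})$, tensored with the cohomology of the Jacobian factor, with no relations among the generators on the stack.

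\textbf{Step 1: generators.} As in the proof of Theorem \ref{thm:taut-high}, $a_1=\frac12\ka_{0,1,0}$, $a_2=\frac12\ka_{0,0,1}$, $f_1=d$ and $f_2=\ka_{-1,0,1}$. The three $Sp$-invariant quadratic expressions in the $b$'s are the classes $x_1,x_2,x_3$ of the statement, expressed through $\ka_{-1,2,0},\ka_{-1,0,2},\ka_{-1,1,1}$. Theorem \ref{thm:taut-high} rewrites every higher $\ka_{i,m,n}$ as a polynomial in these six classes. Hence $H^*_\ka$ is generated by them, and in fact $H^*_\ka=\Q[a_1,a_2,f_2]\otimes R$, where $R$ is the subring generated by $x_1,x_2,x_3$ inside $\Lambda^*(b_1^j,b_2^j)$. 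I would then check that $a_1,a_2,f_2$ are algebraically independent of $R$ and of each other. They are free even-degree generators of the stack cohomology, and the $x_k$ involve only the odd generators, so this holds.

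\textbf{Step 2: relations in $R$.} Set $u_j=b_1^j$ and $v_j=b_2^j$, which are odd, anticommuting classes in a rank $8$ exterior algebra. Then $x_1=u_1u_3+u_2u_4$ and $x_2=v_1v_3+v_2v_4$ are symplectic forms, and $x_3$ is the mixed pairing. The ring $R$ is the $Sp_4$-invariant subring generated by these three forms, i.e. the image of $\Q[x_1,x_2,x_3]$. The listed relations are checked by direct expansion. For example, $x_1^3=0$ holds because only four $u$'s are available. Identities such as $x_1^2x_2+x_1x_3^2=0$ and $6x_1x_2x_3+x_3^3=0$ follow by comparing monomials of a fixed $(u,v)$-bidegree.

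\textbf{Step 3: completeness.} To show these relations generate the whole ideal, I would compare Hilbert series. Compute a monomial basis of the quotient $\Q[x_1,x_2,x_3]/I$, for instance by a Gr\"obner basis as in Figure \ref{conj:code}. Then compare its dimension in each degree with the dimension of $R$, which is the invariant part of $\Lambda^*(\Q^4\otimes\Q^2)$ generated in degree two. Equivalently, check that $\frac{N(T)}{(1-T^2)(1-T)^2}$ agrees with the Hilbert series of $H^*_\ka$ computed in Theorem \ref{thm:main}, where the $(1-T)^{-2}(1-T^2)^{-1}$ factor accounts for $a_1,f_2,a_2$ up to degree conventions.

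I expect the main obstacle to be this completeness step. One has to verify that the ten listed relations cut out exactly the bigraded dimensions of $R$; an extra invariant relation could appear, for example in degree five or six. I would resolve it computationally, by listing a basis of the quotient, analogous to the linear basis $\{1,p_1,\dots,p_3^4\}$ in the proof of Theorem \ref{thm:main}, and matching its numerator with that of \emph{coh}.
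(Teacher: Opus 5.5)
Your proposal is correct and is essentially the paper's argument. You transport along the isomorphism $A^*(\ve(2,d,2))\simeq H^*_\ka(Bun_{2,d}C)$ from Theorem \ref{thm:main} and Corollary \ref{cor:conj}, generate $H^*_\ka$ by $a_1,a_2,f_2$ and the three symplectic contractions $x_1,x_2,x_3$ of the $b_i^j$, check the ten relations in the exterior algebra (Figure \ref{cohom:code}), and get completeness by matching the $14$-element basis of $R$ with the numerator $1+T+2T^2+2T^3+2T^4+2T^5+2T^6+T^7+T^8$ of the quotient.

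The only slip is in Step 1. The second formula of Theorem \ref{thm:taut-high} still involves $\ka_{-1,m,0}$ for $m\geq 3$, which is not among your six generators. The paper handles this with \cite[Corollary 1.2]{larson-24}; you can equally read off $\ka_{-1,m,0}=md\,a_1^{m-1}-m(m-1)a_1^{m-2}x_1$ from the same K\"unneth expansion used for $\ka_{-1,0,n}$.
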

\begin{proof}
	Using Theorem \ref{thm:taut-high} and \cite[Corollary 1.2]{larson-24}, we have that $H^*_\ka(Bun_{2,d}C)$ is generated by 
$$\{a_1,a_2,f_2, b_1^1b_1^3+b_1^2b_1^4, b_2^1b_2^3+b_2^2b_2^4,b_1^1b_2^3+b_1^2b_2^4-b_1^3b_2^1-b_1^4b_2^2\}.$$
Using Theorem \ref{thm:main} and Figure \ref{cohom:code}, we have the claimed relations. The Hilbert series for the claimed presentation matches that of $A^*(\ve(2,d,2))$ as explained in Theorem \ref{thm:main}, establishing the isomorphism.
\end{proof}
\begin{rem}
	It is worth noting that the conjecture has been verified for $\J^d_{2,g}$ \cite[Theorem 1.1]{larson-24}.
	Consequently, the arguments above can also be adapted to obtain an alternate proof for \cite[Corollary 1.2]{larson-24}.
\end{rem}

\section*{References}
    \begingroup  
    \renewcommand{\section}[2]{}%
    \bibliographystyle{amsalpha}
    {\small\bibliography{ack.bib}}

\providecommand{\bysame}{\leavevmode\hbox to3em{\hrulefill}\thinspace}
\providecommand{\MR}{\relax\ifhmode\unskip\space\fi MR }
\providecommand{\MRhref}[2]{%
  \href{http://www.ams.org/mathscinet-getitem?mr=#1}{#2}
}
\providecommand{\href}[2]{#2}
\begin{thebibliography}{Sah25}

\bibitem[AB83]{atiyah-bott}
M.F. Atiyah and R.~Bott, \emph{The {Y}ang-{M}ills equations over {R}iemann surfaces}, Phil. Trans. R. Soc. Lond. A \textbf{308} (1983), 523--615.

\bibitem[CF07]{casnati}
G.~Casnati and C.~Fontanari, \emph{On the rationality of moduli spaces of pointed curves}, Journal of the London Mathematical Society \textbf{75} (2007), no.~3, 582--596.

\bibitem[dB01]{del-bano}
S.~del Ba{\~n}o, \emph{On the {C}how motive of some moduli spaces}, J Reine Angew Math \textbf{532} (2001), 105--132.

\bibitem[EK04]{kirwan-earl}
R.~Earl and F.~Kirwan, \emph{Complete sets of relations in the cohomology rings of moduli spaces of holomorphic bundles and parabolic bundles over a {R}iemann surface}, Proceedings of the London Mathematical Society \textbf{89} (2004), no.~3.

\bibitem[Fri18]{fringuelli}
R.~Fringuelli, \emph{The {P}icard group of the universal moduli space of vector bundles on stable curves}, Advances in Mathematics \textbf{336} (2018), 477–557.

\bibitem[GS]{shubham-higgs}
S.~Ghosh and S.~Saha, \emph{Cycles on the universal moduli of stable rank two bundles over hyperelliptic curves}, In preparation.

\bibitem[Hei10]{heinloth}
J.~Heinloth, \emph{Lectures on the {M}oduli {S}tack of {V}ector {B}undles on a {C}urve}, Affine Flag Manifolds and Principal Bundles (A.Schmitt, ed.), 2010.

\bibitem[HS14]{prada}
O.~Garcia-Prada{,}~J. Heinloth{,} and A.~Schmitt, \emph{On the motives of moduli of chains and {H}iggs bundles}, J. Eur. Math. Soc. \textbf{16} (2014), no.~12, 2617–2668.

\bibitem[Kir92]{kirwan}
F.~Kirwan, \emph{The {C}ohomology {R}ings of {M}oduli {S}paces of {B}undles over {R}iemann {S}urfaces}, Journal of the American Mathematical Society \textbf{5} (1992), no.~4.

\bibitem[Kir02]{kirwan-talk}
\bysame, \emph{Cohomology of {M}oduli {S}paces}, ICM Talk \textbf{1} (2002), 363--382.

\bibitem[KN98]{king-newstead}
A.D. King and P.E. Newstead, \emph{On the cohomology ring of the moduli space of rank 2 vector bundles on a curve}, Topology \textbf{37} (1998), no.~2, 407--418.

\bibitem[Lar23]{larson-18}
H.~Larson, \emph{The intersection theory of the moduli stack of vector bundles on $\mathbb {P}^1$}, Canadian Mathematical Bulletin \textbf{66} (2023), no.~2, 359--379.

\bibitem[Lar25]{larson-24}
\bysame, \emph{The {C}how ring of the universal {P}icard stack over the hyperelliptic locus}, Advances in Mathematics \textbf{479} (2025).

\bibitem[Lat22]{laterveer-franchetta}
R.~Laterveer, \emph{Algebraic cycles and intersections of three quadrics}, Math. Proc. Camb. Philos. Soc. \textbf{173} (2022), no.~2, 349--367.

\bibitem[Lie08]{lieblich}
M.~Lieblich, \emph{Twisted sheaves and the period-index problem}, Compos. Math \textbf{144} (2008), no.~1, 1--31.

\bibitem[PV15]{vakil}
N.~Penev and R.~Vakil, \emph{The {C}how {R}ing of the moduli space of curves of genus six}, Algebraic Geometry \textbf{2} (2015), no.~1, 123--136.

\bibitem[Sah25]{shubham-r2}
S.~Saha, \emph{Rational {C}how ring of the universal moduli space of semistable rank two bundles over genus two curves}, arxiv:2509.06764v2 (2025).

\bibitem[SY25]{bae-maulik}
Y.~Bae{,} D. Maulik{,}~J. Shen{,} and Q.~Yin, \emph{The intrinsic cohomology ring of the universal compactified {J}acobian over the moduli space of stable curves}, arXiv:2509.05577v2 (2025).

\bibitem[Zag95]{zagier}
D.~Zagier, \emph{On the {C}ohomology of {M}oduli {S}paces of {R}ank {T}wo {V}ector {B}undles {O}ver {C}urves}, The Moduli Space of Curves. Progress in Mathematics, 1995.

\end{thebibliography}
    \endgroup  
    
\appendix
\vspace{-0.1in}

\section{Macaulay2 scripts}
\begin{figure}[H]
\begin{lstlisting}[language=Macaulay2]
R = QQ[x,y,z,b, Degrees => {3:1,2}]; 
-- x= k_{0,1,0}, y = k_{-1,2,0}, z= k_{-1,0,1}, b= k_{-1,1,1}
H = 1/2*(4*z - y);T = 1/2*(3*x - y);
B = 1/2*(1/2*(y - x) - z)^2 - 1/2*(1/12*x*(6*y - 9*x) - b - 1/4*x^2 + 1/8*x^2+1/8*(4*z-y)^2);
-- using Cor 9 in Sah25, k_{0,0,1} = 1/4*k_{0,2,0}+1/2*(4*z-y)^2 
--k_{0,2,0}= 1/2*k_{0,1,0}^2 , k_{-1,3,0} = 3/4*k_{0,1,0}(2k_{-1,2,0}-3k_{0,1,0}) by Theorem 1.2
I = ideal(T^3,H^2*T^2 - 4*B*T^2,B*H^2 - B*H*T + 1/2*B*T^2,B*H^2 - B^2,H^3 + H^2*T + 1/2*H*T^2 - 4*B*H);
AU = R/I;  -- Chow ring of U(2,3,2)
s = reduceHilbert (hilbertSeries(AU));
cohom = QQ[a1,p1,p2,p3,a2,c2,Degrees=>{1:1,1:1,1:3,1:2,1:1}]/(p1^3, p1^2*p3, p1^2*p2+p1*p3^2, 6*p1*p2*p3+p3^3, p1*p3^3, p1*p2^2+p2*p3^2, p2^2*p3, p2*p3^3, p2^3 ,p3^5);
coh = reduceHilbert (hilbertSeries(cohom));
hilb = frac(QQ[t]);
s = (1+2*t+4*t^2+4*t^3+2*t^4+t^5)/(1-t);--Hilbert series of AU
r = t^2*(1+3*t+6*t^2+3*t^3+t^4)/((1-t^2)*(1-t)^2);-- Hilbert series of unstable locus
print(coh==s+r)--check conjecture
\end{lstlisting}
\caption{Macaulay2 Code: Conjecture for r,g=2}
\label{conj:code}
\end{figure}
\begin{figure}[H]
\begin{lstlisting}[language=Macaulay2]
rng = QQ[b11,b12,b13,b14,b21,b22,b23,b24, Degrees=>{4:1,4:3}, SkewCommutative=>true];
p1 = b11*b13 + b12*b14;
p2 = b21*b23 + b22*b24;
p3 = b11*b23 - b13*b21 + b12*b24 - b14*b22;
print(p1^3, p1^2*p3, p1^2*p2+p1*p3^2, 6*p1*p2*p3+p3^3, p1*p3^3, p1*p2^2+p2*p3^2, p2^2*p3, p2*p3^3, p2^3 ,p3^5)--verify relations
print(p3^2,p5,p3*p5,p4,p3*p4,p5^2,p3*p5^2,p4*p5,p4^2,p5^3,p4*p5^2,p5^4)--verify linear independence
\end{lstlisting}
\caption{Macaulay2 Code: The ring $A^*(\ve(2,d,2))$ - verifying relations}
\label{cohom:code}
\end{figure}
\end{document}